\definecolor{Burgundy}{RGB}{128,0,32}
\newtheorem{theorem}{Theorem}
\newtheorem{lemma}{Lemma}
\begin{document}

\centerline{\Large \textbf{A generalization of a fourth}}
\vskip 4pt
\centerline{\Large \textbf{irreducibility theorem of I.~Schur}}

\vskip 10pt
\centerline{by}
\vskip 10pt
\centerline{\large \textsc{Martha Allen} (Milledgeville, GA) and}
\vskip 4pt
\centerline{\large \textsc{Michael Filaseta} (Columbia, SC)}

\renewcommand{\thefootnote}{}
\footnote{
\textit{
\vskip -10pt
2000 Mathematics Subject Classification:} \, 11R09, 12E05, 11C08.}
\vskip 0pt

\section{Introduction}

In 1929, I.~Schur \cite{sc} showed the following general theorem.

\begin{theorem}[Schur]
Let $n$ be a positive integer, and let $a_0, a_1, \dots, a_n$ denote 
arbitrary integers with $|a_n|=|a_0|=1$.  Then
\[
f(x) = a_{n}\dfrac{x^{n}}{n!} + a_{n-1}\dfrac{x^{n-1}}{(n-1)!} + \cdots + a_{2} \dfrac{x^{2}}{2!} + a_{1} x + a_{0} 
\]
is irreducible.   
\end{theorem}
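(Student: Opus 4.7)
The plan is to clear denominators and apply a $p$-adic Newton polygon argument at a prime produced by the Sylvester--Schur theorem. Write $F(x) = n! f(x) = \sum_{k=0}^n c_k x^k$ with $c_k = a_k\, n!/k! \in \mathbb{Z}$; since $|a_n|=|a_0|=1$, $F$ has leading coefficient $\pm 1$ and constant term $\pm n!$. By Gauss's lemma, irreducibility of $f$ over $\mathbb{Q}$ is equivalent to $F$ admitting no factorization $F = GH$ in $\mathbb{Z}[x]$ with both factors of positive degree. Suppose for contradiction such a factorization exists; the condition on the leading coefficient forces the leading coefficients of $G$ and $H$ to equal $\pm 1$, and we may take $m := \deg G$ with $1 \le m \le n/2$.

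By the Sylvester--Schur theorem, there is a prime $p > m$ dividing one of $n-m+1, \ldots, n$; write $p \mid n - j$ with $0 \le j \le m-1$. For the moment suppose also $p > \sqrt{n}$ (automatic when $m \ge \sqrt n$). Since $p > m > j$, no positive power of $p$ has a multiple in the length-$j$ interval $(n-j, n]$, so $v_p(n!/k!) = 0$ for $k \ge n-j$ while $v_p(n!/k!) \ge 1$ for $k < n-j$ (as $n-j \in (k,n]$). Moreover $p^2 > n$ gives $v_p(k!) = \lfloor k/p \rfloor$, so $v_p(c_k) \ge t - k/p$ on $[0, tp]$ with $t = \lfloor n/p \rfloor = v_p(n!)$. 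Combined with $v_p(c_0) = t$ and $v_p(c_n) = 0$, these lower bounds force every slope of the Newton polygon of $F$ at $p$ to lie in $[-1/p, 0]$, and they bound the horizontal length of its terminating slope-$0$ segment by $j$.

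The roots of $G$ in $\overline{\mathbb{Q}}_p$ form a sub-multiset of those of $F$, so each has $p$-adic valuation in $[0, 1/p]$. If $\alpha$ of the $m$ roots of $G$ have positive valuation, then $v_p(G(0)) = \sum v_p(\alpha_i) \le \alpha/p$, and $v_p(G(0))$ is a nonnegative integer. If $\alpha > 0$, then $v_p(G(0)) \ge 1$, whence $\alpha \ge p > m$, contradicting $\alpha \le m$. Hence $\alpha = 0$, so all $m$ roots of $G$ correspond to the slope-$0$ segment of $F$'s polygon; its length being at most $j$ yields $m \le j \le m-1$, a contradiction.

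The main obstacle is the case $m < \sqrt{n}$, where the Sylvester--Schur prime $p$ may satisfy $p \le \sqrt n$: then $v_p(n!)$ receives contributions from $p^2, p^3, \ldots$, and the clean slope bound $[-1/p, 0]$ need not hold. One resolves this either by strengthening the prime-existence input -- demanding a prime in $(\sqrt n, n]$ dividing some $n - j$ with $j < m$ -- or by a more delicate multi-segment analysis of the Newton polygon at small primes that still forces $\deg G$ to be divisible by an integer exceeding $m$. Handling this gap uniformly across all small $m$ is the technical heart of the argument and is typically where the author's refinements enter.
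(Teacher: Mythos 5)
The paper does not actually prove Theorem~1; it cites Schur \cite{sc} for it, and the relevant methodology appears only in its proof of the analogous Lemma~\ref{3l1}. Your overall plan --- clear denominators, invoke Sylvester's theorem to get a prime $p>m$ dividing one of $n,n-1,\dots,n-m+1$, and run a Newton polygon argument at $p$ --- is exactly the classical route and matches the technique used in the proof of Lemma~\ref{3l1}.

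The ``gap'' you flag at the end, however, is not a real obstacle, and neither of your proposed remedies (a stronger prime-gap input, or a multi-segment analysis) is needed. The point you miss is that the restriction $p>\sqrt{n}$ was only used to get the exact formula $v_p(k!)=\lfloor k/p\rfloor$; what the argument actually requires is only the universal estimate
\[
v_p(k!)=\sum_{i\ge 1}\Big\lfloor \tfrac{k}{p^i}\Big\rfloor \;<\; \sum_{i\ge 1}\tfrac{k}{p^i}\;=\;\tfrac{k}{p-1},
\]
valid for every prime $p$ and every $k\ge 1$ (this is precisely the bound used in the proof of Lemma~\ref{3l1}). Since $v_p(c_0)-v_p(c_k)\le v_p(n!)-v_p(n!/k!)=v_p(k!)$, you get $\bigl(v_p(c_0)-v_p(c_k)\bigr)/k<1/(p-1)$ for all $k\ge1$, so every slope of the Newton polygon of $F$ at $p$ lies strictly in $(-1/(p-1),\,0]$. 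Sylvester gives $p>m$, hence $p-1\ge m$, so every slope lies strictly in $(-1/m,\,0]$. An edge of the Newton polygon of $G$ with nonzero slope would have lattice endpoints and horizontal length $\le m$, forcing its slope to have absolute value $\ge 1/m$; that is impossible, so the Newton polygon of $G$ is the horizontal segment of length $m$. By Dumas' lemma (Lemma~\ref{NPlem}) the polygon of $F$ must then contain a horizontal edge of length $\ge m$, while you have already shown that edge has length at most $j\le m-1$. This closes the argument uniformly in $m$ with no additional input, so your proof is in fact complete once the estimate $v_p(k!)<k/(p-1)$ replaces the ad hoc assumption $p>\sqrt n$.
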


\noindent
Here, and throughout this paper, irreducibility is over the field of rational numbers.  

The second author \cite{fi} showed that the condition that $|a_{n}| = 1$ can be relaxed to $0 < |a_{n}| < n$ provided
$(a_{n}, n) \not\in \{ (\pm 5, 6), (\pm 7, 10) \}$, and that this result is in some sense best possible. 
More precisely, there are examples for each pair $(a_{n}, n) \in \{ (\pm 5, 6), (\pm 7, 10) \}$ where
$0 < |a_{n}| < n$, $|a_0|=1$ and the polynomial $f(x)$ is reducible, and there are examples of reducible $f(x)$ for each
$n > 1$ with $a_{n} = \pm n$ and $|a_0|=1$.  

In a second paper by I.~Schur \cite{sch}, three other similar irreducibility results were obtained involving 
a condition $|a_{n}| = 1$ on part of the leading coefficient as before.  Two of these results were generalized by the
authors in \cite{af,af2} as follows.

\begin{theorem}
For $n$ an integer $\ge 1$, define 
\[
f(x)=\sum_{j=0}^{n}a_j\frac{x^j}{(j+1)!}
\]
where 
the $a_j$'s are arbitrary integers with $|a_0|=1$.
Let $k'$ be the integer such that
$n+1=k'2^u$ where $k'$ is odd and $u$ is an integer $\ge 0$. 
Let $k''$ be the integer such that
$(n+1)n=k''2^u3^v$ where $(k'',6)=1$, $u$ is an integer $\ge 1$, and $v$ 
is an integer $\ge 0$.  Let $M=\min \{ k',k'' \}$.  If
$0 < |a_n| < M$, then $f(x)$ is irreducible.
\end{theorem}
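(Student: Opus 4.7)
The plan is to prove this by contradiction via a Newton polygon argument, following the framework Schur developed in \cite{sch} and the authors refined in \cite{af,af2}.

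First, I would clear denominators. Set
\[
F(x) = (n+1)!\,f(x) = \sum_{j=0}^n a_j\,\frac{(n+1)!}{(j+1)!}\,x^j \in \mathbb{Z}[x],
\]
so the leading coefficient of $F$ is $a_n$ and the constant term is $\pm(n+1)!$. Irreducibility of $f$ over $\mathbb{Q}$ is equivalent to that of $F$. Assume for contradiction that $F=GH$ with $G,H\in\mathbb{Z}[x]$ nonconstant (by Gauss's lemma), and take $H$ to be the factor of smaller degree, say $\deg H = s$ with $1\le s\le n/2$.

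For each prime $p$, consider the Newton polygon $N_p(F)$: the lower convex hull of the points $(j,\,\nu_p(a_j)+\nu_p((n+1)!/(j+1)!))$ for those $j$ with $a_j\ne 0$. Since $|a_0|=1$, the leftmost vertex has height $\nu_p((n+1)!)$, and the rightmost has height $\nu_p(a_n)$. The standard fact is that $N_p(F)$ decomposes as the concatenation, in order of increasing slope, of $N_p(G)$ and $N_p(H)$; consequently any slope $a/b$ of $N_p(F)$ (in lowest terms) can contribute to $N_p(H)$ only in horizontal increments that are multiples of $b$. Legendre's formula $\nu_p((n+1)!)=\sum_{i\ge 1}\lfloor (n+1)/p^i\rfloor$ supplies the needed heights.

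The argument then splits naturally according to which of $k',k''$ equals $M$. If $M=k'$, I would select an odd prime $p$ dividing $n+1$; the bound $|a_n|<k'$ forces $\nu_p(a_n)$ to be small enough that the rightmost edge of $N_p(F)$ has a slope whose denominator in lowest terms exceeds $s$, contradicting $\deg H=s$. If $M=k''$, the same strategy is applied with a prime $p\ge 5$ dividing $n(n+1)$, now using the expansion of $\nu_p((n+1)!/(j+1)!)$ near $j=n$ and $j=n-1$ to pin down the slope (this is where the factor $n$ in $(n+1)n$ enters, since $p\mid n$ forces a jump in valuation between consecutive coefficients).

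The main obstacle is verifying the required rigidity of $N_p(F)$ for the chosen prime $p$: one must show that no interior point $(j,\,\nu_p(a_j)+\nu_p((n+1)!/(j+1)!))$ lies below the line joining the extreme vertices, or, if some do, that the resulting subedges still carry slopes with denominators too large to divide $s$. This requires a careful case analysis of the $p$-adic behaviour of the falling factorial $(n+1)!/(j+1)!$, and is the reason for the specific exclusion of $p=2$ (from $k'$) and $p\in\{2,3\}$ (from $k''$), for which the Newton polygon is not uniformly rigid. The definitions of $k'$ and $k''$ are tuned precisely so that the surviving primes suffice to rule out every possible factor degree in $\{1,\ldots,\lfloor n/2\rfloor\}$, and confirming this in full generality—particularly when $n+1$ or $n(n+1)$ has very few large prime factors—is where the bulk of the technical work will live.
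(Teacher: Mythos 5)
The Newton-polygon framework you set up (clear denominators to get $F=(n+1)!\,f$, use Dumas' theorem that $N_p(F)$ is the slope-ordered concatenation of $N_p(G)$ and $N_p(H)$, and compare slope denominators against the degree of a putative factor) is the right machinery, and it is indeed the machinery the authors use in \cite{af} and in the present paper's analogue (Lemma~\ref{3l1}).  But your organization of the proof around ``which of $k',k''$ equals $M$'' misreads the role those two quantities play, and this is a genuine gap.

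The hypothesis $0<|a_n|<M=\min\{k',k''\}$ gives you \emph{both} inequalities $|a_n|<k'$ and $|a_n|<k''$ simultaneously; there is no case split by $M$.  The correct split is by the degree $k$ of a putative factor of $F$.  The key lemma one needs (the analogue of Lemma~\ref{3l1}) is: for $1\le k\le n/2$, if there is a prime $p\ge k+2$ and an $r\ge1$ with $p^r\mid (n+1)n(n-1)\cdots(n-k+2)$ but $p^r\nmid a_n$, then $F$ has no factor of degree $k$.  With that in hand, $k'$, which is the part of $n+1$ coprime to $2$, is precisely the product of all $p^r\Vert(n+1)$ with $p\ge3=1+2$, so $|a_n|<k'$ kills degree-$1$ factors; and $k''$, the part of $(n+1)n$ coprime to $6$, is the product of all $p^r\Vert(n+1)n$ with $p\ge5\ge2+2$, so $|a_n|<k''$ kills degree-$2$ factors.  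Your proposal applies only one of these primes at a time depending on which $\min$ is attained, which leaves whichever of degree $1$ or $2$ the ``other'' quantity was supposed to handle entirely unaddressed.

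The larger omission is degrees $k\ge3$.  You write that the definitions of $k'$ and $k''$ are ``tuned precisely so that the surviving primes suffice to rule out every possible factor degree in $\{1,\dots,\lfloor n/2\rfloor\}$,'' but that is not so: $k'$ and $k''$ rule out only $k=1$ and $k=2$.  For each $k\ge3$ one must show (analogously to Lemma~\ref{3l2}) that the product of the prime powers $p^r\Vert(n+1)n\cdots(n-k+2)$ with $p\ge k+2$ exceeds $\min\{k',k''\}\ (\le n+1)$, and since the theorem as stated has \emph{no} finitely-many-exceptions caveat (unlike Theorem~\ref{themainthm}), this estimate must be made effective and uniform in $n$.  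That is where the bulk of the work lies, and your proposal offers no idea for how to attack it, nor even recognizes that it is a separate problem from the $k'$ and $k''$ bounds.  As written, the plan proves nothing about factors of degree $\ge3$ and mishandles the interplay of $k'$ and $k''$ for degrees $1$ and $2$.
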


\begin{theorem}
For $j \ge 0$, let $u_{2j}=1\times 3\times 5\times \cdots \times(2j-1)$. 
For $n$ an integer $>1$, define 
\[
f(x)=\sum_{j=
0}^{n}a_j\frac{x^{2j}}{u_{2j}}
\]
 where the $a_j$'s are arbitrary integers with $|a_0|=1
$.  If $0 < |a_n| < 2n-1$, then $f(x)$ is irreducible for all but finitely
many pairs $(a_n,n)$.
\end{theorem}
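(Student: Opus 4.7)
The plan is to clear denominators by setting $F(x) = u_{2n}f(x) \in \mathbb{Z}[x]$, and to exploit that $f$ is an even polynomial by writing $F(x) = G(x^{2})$ with
\[
G(y) = \sum_{j=0}^{n} a_{j}\,(2j+1)(2j+3)\cdots(2n-1)\,y^{j} \in \mathbb{Z}[y].
\]
A nontrivial rational factorization $f = gh$ must take one of two forms: (i) both $g$ and $h$ are polynomials in $x^{2}$, in which case $G$ itself factors nontrivially over $\mathbb{Q}$; or (ii) $h(x) = c\,g(-x)$ with $\deg g = n$, in which case $f(x) = c\,g(x)g(-x)$. (By Capelli's theorem these cases cover every possible reducibility of $f$.) I would rule out each in turn.

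For case (i) the main tool is the $p$-adic Newton polygon of $G$ at odd primes $p$ with $n < p \le 2n-1$. A direct count yields
\[
v_{p}\!\left(\frac{u_{2n}}{u_{2j}}\right) = \begin{cases} 1, & 0 \le j \le (p-1)/2, \\ 0, & (p+1)/2 \le j \le n. \end{cases}
\]
Since $|a_{0}| = 1$ we have $v_{p}(G(0)) = 1$, and $v_{p}$ of the leading coefficient of $G$ equals $0$ whenever $p \nmid a_{n}$ (automatic when $|a_{n}| < p$, consistent with the hypothesis $|a_{n}| < 2n-1$). Letting $j^{*}$ be the least $j \in [(p+1)/2, n]$ with $p \nmid a_{j}$, the polygon is a single edge of slope $-1/j^{*}$ from $(0,1)$ to $(j^{*},0)$ followed by horizontal edges. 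Because $\gcd(1, j^{*}) = 1$, Dumas's theorem forces every factor of $G$ to have degree $\ge j^{*}$ or $\le n - j^{*}$. When $2n-1$ is itself prime, $(p+1)/2 = n$ forces $j^{*} = n$, so no nontrivial factorization is allowed and $G$ is irreducible outright. When $2n-1$ is composite, I would iterate the argument with further primes in $(n/2, 2n-1]$, using the analogous but more delicate valuation formulas when $p \le n$, until the remaining candidate degrees are all excluded. A Sylvester-type lower bound on primes dividing $(2n-1)!!$ in short intervals makes this iteration succeed for all $n$ past a computable threshold.

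For case (ii), matching the constant and leading coefficients in $f(x) = c\,g(x)g(-x)$ gives $a_{0} = c\,g(0)^{2}$ and $a_{n}/u_{2n} = (-1)^{n}\,c\,\operatorname{lc}(g)^{2}$. Eliminating $c$ and using $a_{0}^{2} = 1$ yields
\[
(-1)^{n}\,a_{0}\,a_{n}\,u_{2n} = \bigl(a_{n}\,g(0)/\operatorname{lc}(g)\bigr)^{2},
\]
so $(-1)^{n}a_{0}a_{n}u_{2n}$ must be a non-negative integer square. But for the same prime $p$ used in case (i) one has $v_{p}(u_{2n}) = 1$ and $p \nmid a_{0}a_{n}$, so the left-hand side has $p$-adic valuation $1$, which is odd --- a contradiction. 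Thus case (ii) is also excluded whenever a suitable prime $p$ is available.

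The main obstacle is the Newton polygon iteration in case (i) when $2n-1$ is composite: one must combine the exclusion intervals coming from several primes in $(n/2, 2n-1]$, and at primes $p \le \sqrt{2n-1}$ higher prime powers enter the valuation of $u_{2n}/u_{2j}$ and must be tracked carefully. These complications are responsible for the ``all but finitely many'' qualifier in the statement; the residual finite exceptional set is handled by direct computation for small $n$.
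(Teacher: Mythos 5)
The paper states this theorem as a result from \cite{af2} without reproducing its proof, but the same apparatus is deployed here for Theorem~\ref{themainthm}, so the comparison is still fair. Your route is genuinely different and, as written, has a gap exactly where the real work lies. The paper does not pass to the degree-$n$ polynomial $G(y)$; it works directly with the degree-$2n$ polynomial $F(x)=u_{2n}f(x)$ (with its zero odd coefficients). For each odd $k$ in $[1,n]$, a single prime $p\ge k+4$ that divides the short product $(2n-1)(2n-3)\cdots(2n-k)$ to a power not dividing $a_n$ rules out, via Dumas's theorem, any factor of $F$ of degree $k$ \emph{or} $k+1$; sweeping over odd $k$ then covers every possible factor degree up to $n$, which suffices since $\deg F=2n$. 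The analytic content is proving that for $|a_n|<2n-1$ such a prime always exists when $n$ is large, and this is where the short-interval prime estimates, the combinatorial Lemma~\ref{T}, and --- for the small cases $k\in\{5,7,9,11\}$ --- Thue's theorem on $ax^{3}+by^{3}=c$ and Mahler's ineffective result all enter. You correctly identify the cleanest single prime $p\in(n,2n-1]$, but that only forces $j^{*}=n$ when $2n-1$ is itself prime; for composite $2n-1$ the Newton polygon of $G$ at one such prime only says that \emph{one} factor has degree at least $j^{*}$, which rules out nothing about small-degree factors once $j^{*}<n$. The sentence ``iterate with further primes in $(n/2,2n-1]$, using the analogous but more delicate valuation formulas'' is a placeholder for precisely the content of Lemmas~\ref{3l2}--\ref{3l5} and Lemma~\ref{T}; leaving it at that leaves the hard part of the theorem undischarged.

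Two secondary problems. First, your account of the ``all but finitely many'' qualifier is wrong: the exceptions do not come from small $n$ to be checked by computation, but from the ineffective Thue and Mahler theorems that handle $k\in\{5,7,9,11\}$; since those tools are ineffective, the exceptional set cannot be listed by a finite search, and the qualifier is intrinsic rather than a bookkeeping convenience. Second, the dichotomy as you state it --- that every factorization $f=gh$ has either both factors in $\mathbb{Q}[x^{2}]$ or $h(x)=c\,g(-x)$ --- is false as a classification of factorizations: for $f=(x^{2}-1)(x^{2}-4)$ take $g=x-1$ and $h=(x+1)(x^{2}-4)$, which fits neither case. The correct (Capelli) statement is: if $G$ is irreducible, then $G(x^{2})$ is either irreducible or factors as $c\,g(x)g(-x)$ with $\deg g=n$. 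The proof template you want is therefore ``either $G$ is reducible (case~(i)) or $G$ is irreducible and $f=c\,g(x)g(-x)$ (case~(ii))''; with that repair, your case~(ii) valuation argument via $\nu_{p}(u_{2n})=1$ is sound, but case~(i) still needs the full degree-by-degree analysis sketched above.
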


In this paper, we generalize the fourth irreducibility theorem of Schur~\cite{sch} from 1929 
in which again a condition $|a_{n}| = 1$ on part of the leading coefficient is relaxed.  
Specifically, we prove the following.

\begin{theorem}\label{themainthm}
Let $u_{2j}=1 \times 3 \times  5\ \times \cdots \times (2j-1)$. 
For $n$ an integer $\ge 1$, define 
\begin{equation}\label{themainthmeq}
f(x)=\sum_{j=0}^{n}a_j\frac{x^{2j}}{u_{2j+2}}
\end{equation}
where the $a_j$'s are arbitrary integers with $|a_0|=1$.
Let $k'$ be the integer such that 
$2n+1=k'3^u$ where $u$ is an integer $\ge 0$ and $(k',3)=1$.
Let $k''$ be the integer such that 
$(2n+1)(2n-1)=k''3^u5^v$ where $u$ and $v$ are integers $ \ge 0$ 
and $(k'',15)=1$.  
Let $M= \min \{ k', k'' \}$.
If $0 < |a_n| < M$, then
$f(x)$ is irreducible for 
all but finitely many pairs $(a_n,n)$.
\end{theorem}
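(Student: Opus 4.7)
We mimic the strategy of Theorems~2 and~3 in~\cite{af, af2}. Clearing denominators gives
\[
F(x) := u_{2n+2}\, f(x) = \sum_{j=0}^n b_j\, x^{2j}\in\mathbb{Z}[x^2],
\qquad
b_j = a_j\prod_{\ell=j+1}^n (2\ell+1),
\]
so $b_n = a_n$ and $b_0 = a_0\, u_{2n+2}$. Any rational factorization of $f$ lifts via Gauss's lemma to one of $F$ in $\mathbb{Z}[x]$. Since $F\in\mathbb{Z}[x^2]$ with $F(0)\ne 0$, a standard symmetry argument shows that if $F$ is reducible then at least one of the following holds: \textup{(A)} $F = P\cdot Q$ with both $P,Q\in\mathbb{Z}[x^2]$, equivalently $G(y) := \sum_j b_j y^j$ is reducible in $\mathbb{Z}[y]$; or \textup{(B)} $F(x) = \pm P(x)P(-x)$ for some $P(x)\in\mathbb{Z}[x]$ with $\deg P = n$ and $P(x)\notin\mathbb{Z}[x^2]$. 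The plan is to rule out both.

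Case~\textup{(B)} is eliminated uniformly for every $n\ge 1$. Evaluating at $x=0$ gives $|F(0)| = P(0)^2$, so $u_{2n+2}$ would have to be a perfect square. By Bertrand's postulate, for every $n\ge 1$ there is a prime $p$ with $n+1 < p \le 2n+1$; such a $p$ is odd and, because $3p > 2n+1$, the only odd integer in $[1, 2n+1]$ divisible by $p$ is $p$ itself. Hence $\nu_p(u_{2n+2}) = 1$, so $u_{2n+2}$ is never a square and \textup{(B)} is impossible.

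For case~\textup{(A)}, we analyze the $p$-adic Newton polygon of $G$ at primes $p\mid k'$ or $p\mid k''$. The cleanest situation is when $2n+1$ is prime with $2n+1\mid k'$ (i.e., $2n+1 \ne 3$): the condition $|a_n| < k' = 2n+1$ forces $(2n+1)\nmid a_n$, one computes $\nu_p(b_j) = \nu_p(a_j)+1 \ge 1$ for $0 \le j < n$, $\nu_p(b_0)=1$, $\nu_p(b_n)=0$, and Eisenstein at $p = 2n+1$ gives $G$ irreducible. For composite $2n+1$, one has $\nu_p(u_{2n+2})\ge 2$ for each $p\mid 2n+1$, and the Newton polygon of $G$ has a staircase shape with several edges; Dumas' theorem restricts the degrees of rational factors of $G$ to specific sums of edge-lengths. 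A parallel analysis at primes $p\mid k''$ dividing $2n-1$ (so $p\ge 7$) further narrows the possibilities. The hypotheses $|a_n|<k'$ and $|a_n|<k''$ prevent $a_n$ from being divisible by all the relevant primes, so constraints from at least two primes in $(2n+1)(2n-1)$ intersect to exclude every nontrivial factor degree for all but finitely many $n$; any residual linear factor of $G$ would correspond to a rational root, eliminated via a root-size bound combining $|a_0|=1$ with the Cauchy estimate.

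\textbf{The main obstacle} is the multi-prime Newton polygon combinatorics required when $2n+1$ is composite: one must coordinate staircase polygons at several primes of $k'$ and $k''$, verify that the intersection of allowed factor-degree sets is trivial, and remove any rational roots of $G$ that remain consistent with these constraints. The root-size bounds used in the last step are effective only once $n$ exceeds an absolute constant, and this is the source of the finitely many exceptional pairs $(a_n,n)$ in the statement.
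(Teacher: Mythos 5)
Your reduction via the involution $x\mapsto -x$ — splitting into case (A), reducibility of $G(y)=\sum b_j y^j$, and case (B), $F(x)=\pm P(x)P(-x)$ — is a legitimate and genuinely different opening from the paper's, and your elimination of case (B) via Bertrand's postulate (showing $u_{2n+2}$ has a prime appearing to exactly the first power, hence is not a square) is clean and correct. The paper instead applies Dumas/Newton-polygon machinery directly to $F(x)=u_{2n+2}f(x)$ of degree $2n$ in $x$, which treats all factor degrees $1,\dots,2n$ at once and makes the $x\mapsto -x$ symmetry irrelevant; your route is an honest alternative for that piece.

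The genuine gap is in case (A), and it is not the kind of gap that is filled by routine Newton-polygon bookkeeping. Your plan draws on primes dividing $k'$ and $k''$ — that is, primes dividing $(2n+1)(2n-1)$. Such primes, fed into a Dumas-type argument, rule out factors of small degree (this is exactly what the hypothesis $|a_n|<M$ buys, and it is how the paper handles degrees $1$--$4$). But when $2n+1$ and $2n-1$ both have only small prime factors (say all at most on the order of $\sqrt{n}$, which happens for infinitely many $n$), those primes give \emph{no} information about factors of degree around, say, $n/2$: in the slope estimate, a prime $p$ dividing $2n+1$ only forbids factor degrees up to roughly $p-4$. Your assertion that ``constraints from at least two primes in $(2n+1)(2n-1)$ intersect to exclude every nontrivial factor degree'' is therefore not true, and Eisenstein-when-$2n+1$-is-prime covers only a sparse set of $n$. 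What is actually needed, and what the paper's Lemma~\ref{3l2} supplies, is a lower bound on the contribution of \emph{all} primes $p\ge k+4$ dividing the long product $(2n+1)(2n-1)\cdots(2n-k+2)$, uniformly over odd $k\in[5,n]$, showing that product exceeds $2n+1\ge|a_n|$. Establishing this requires serious analytic input that your sketch never touches: Huxley-type prime-gap estimates for large $k$, an elementary counting argument via Lemma~\ref{T} for medium $k$, Thue's theorem on $|c_1x^3-c_2y^3|=\ell$ for $k\in\{7,9,11\}$, and an (ineffective) theorem of Mahler for $k=5$. The ineffectivity of Mahler's result is also the true source of the ``finitely many exceptional pairs'' in the statement — not the root-size bounds you cite, which are a red herring here.
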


In the case of $|a_n|=1$, I.~Schur \cite{sch} showed that $f(x)$ above is irreducible unless $2n = 3^{u}-1$ for some integer $u \ge 2$.
Furthermore, he showed that if $f(x)$ is reducible with $|a_n|=1$, then $f(x)$ is $x^{2} \pm 3$ times an irreducible polynomial.  
Observe that Theorem~\ref{themainthm} gives no information in the case that $2n = 3^{u}-1$ as $M = k' = 1$.  
In the case that $u = 1$ so that $f(x)$ has degree $2$, the quadratic will be irreducible.  
We will show that if $f(x)$ is as in \eqref{themainthmeq} with $a_{0} = 1$ and $a_{n} = M = k'$ in general, 
then $f(x)$ is either irreducible or divisible by a quadratic polynomial, and the latter can happen.  
The value of $M$ is also $1$ when $k'' = 1$ which occurs only for $n \in \{ 1, 2, 13 \}$.  
The case $n = 1$ corresponds to $f(x)$ being a quadratic as discussed above. 
For $n = 13$, we already know $f(x)$ can have a quadratic factor since $2 \cdot 13 = 3^{3} - 1$.  
For $n \ge 2$ in general, we show that 
if $f(x)$ is as in \eqref{themainthmeq} with $a_{0} = 1$ and $a_{n} = M = k'' < k'$, 
then $f(x)$ is either irreducible or divisible by a quartic polynomial, and the latter can happen.   
In the case $n = 2$, the polynomial $f(x)$ is quartic and will be irreducible.

From henceforth, the polynomial $f(x)$ will be as defined in \eqref{themainthmeq}.  
Theorem~\ref{themainthm} goes back to work associated with the first author's dissertation~\cite{ma}.  
The result has the weakness over the prior results stated above in that there are
finitely many, possibly zero, exceptional pairs $(a_n,n)$ that are not determined.  
As determining these finitely many pairs seems particularly difficult due to an application of
an ineffective result of Mahler~\cite{mahler}, we are left with this
less than precise result.  On the other hand, like with the prior results, as noted above, we will show that
Theorem~\ref{themainthm} is best possible in the sense that for each $n > 2$, if $a_n=M$ and 
$a_{0}=1$, then there are integers 
$a_{n-1}, a_{n-2}, \dots, a_1$ such that $f(x)$ is reducible.  

Before closing this introduction, we note that recent work related to the fourth irreducibility theorem of Schur~\cite{sch} from 1929
has been done by A.~Jakhar \cite{jakhar}.  

\section{The basic strategy}
To establish that $f(x)$ is irreducible for all but finitely many pairs
$(a_n,n)$ with $0 < |a_n| < M$, it suffices to show that $f(x)$ is irreducible for sufficiently
large $n$. We use the following two lemmas in proving the irreducibility of 
$f(x)$ when $0 < |a_n| < M$.  We explain the proof of Theorem~\ref{themainthm} 
based on these lemmas after we state them.  The proofs of the lemmas are in the next two sections.

\begin{lemma}\label{3l1}
Let $a_0, a_1, \dots, a_n$ denote arbitrary integers with $|a_0|=1$, and let 
\[ 
f(x)=\sum_{j=0}^{n}a_j \frac{x^{2j}}{u_{2j+2}}. 
\] 
 Let $k$ be a positive odd integer $ \le n$.  
Suppose there exists a prime $p > k+2$ (so $p \ge k+4$) 
and a positive integer $r$
for which 
\[ 
p^r | \big( (2n+1)(2n-1)(2n-3) \cdots (2n-k+2) \big)
\text{ \ \ and \ \ } p^r \nmid a_n.
\]  
Then $f(x)$ cannot have a factor of degree $k$ or $k+1$.  
\end{lemma}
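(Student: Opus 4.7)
My plan is to prove Lemma~\ref{3l1} by a Newton polygon argument at the prime $p$, applied to the polynomial $G(y) \in \mathbb{Z}[y]$ obtained from $f(x)$ by substituting $y = x^2$ and clearing denominators.

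The first step is a reduction. Because $f(x)$ involves only even powers of $x$ and $f(0) = a_0 \neq 0$, a parity analysis shows that a hypothetical factor $h(x) \mid f(x)$ of odd degree $k$ cannot satisfy $h(\pm x) = \pm h(x)$ (the options force either $h(0) = 0$ or a polynomial in $x^2$ of odd degree), so $h(x) h(-x)$ is a polynomial in $x^2$ of degree $2k$ giving the polynomial $g(y) := f(\sqrt{y}) = \sum_{j=0}^n a_j y^j/u_{2j+2}$ a factor of degree $k$; a similar analysis handles $\deg h = k+1$, yielding either a degree-$(k+1)/2$ or degree-$(k+1)$ factor of $g$. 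Multiplying through gives the integer polynomial
\[
G(y) := u_{2n+2}\, g(y) = \sum_{j=0}^n A_j y^j, \qquad A_j := a_j \prod_{i=j+1}^n (2i+1),
\]
and Gauss's lemma transports the hypothetical factor to an integer factorization $G = G_1 G_2$ with $\deg G_1 = d \in \{(k+1)/2,\, k,\, k+1\}$.

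The second step extracts the $p$-adic valuations of the coefficients. Since $p > k + 2$ and both $p$ and $k$ are odd (forcing $p \ge k+4$), the $(k+1)/2$ odd numbers $2n+1,\, 2n-1,\, \dots,\, 2n-k+2$ lie in an interval of length $k - 1 < p$ and hence contain at most one multiple of $p$. Combined with the hypothesis that $p^r$ divides their product, exactly one such number, say $2n - 2t + 1$ with some $0 \le t \le (k-1)/2$, has $v_p$-valuation $r' \ge r$. Tracking which odd factors enter each $A_j$ (using that no other odd multiple of $p$ lies within distance $2p$ of $2n - 2t + 1$) yields
\[
v_p(A_n) = v_p(a_n) \le r-1,\qquad v_p(A_{n-m}) \ge r \ \text{for}\ m \ge t+1,\qquad v_p(A_{n-m}) = v_p(a_{n-m}) \ \text{for}\ 0 \le m \le t.
\]

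The third step is the Newton polygon argument. The data above locate the Newton polygon of $G$ at $p$ as passing through $(n,\, v_p(a_n))$, below a ``shelf'' of height $\ge r$ set by all points at abscissae $n - m$ with $m \ge t+1$, while permitting the polygon to dip low in the middle window $n - t \le x \le n - 1$. By a Dumas-type splitting theorem, each segment of slope $-c/b$ in lowest terms corresponds over $\mathbb{Q}_p$ to roots whose minimal polynomials have degree divisible by $b$; consequently $\deg G_1$ must decompose as $\sum b_i m_i$ where $b_i$ runs over denominators of slopes of $G$'s Newton polygon used in $G_1$. I plan to show that for every configuration of the Newton polygon consistent with the valuation bounds above, neither $(k+1)/2$, $k$, nor $k+1$ admits such a decomposition; the strict bound $p > k + 2$ forces the relevant denominators in lowest terms to exceed the small divisors of these candidate degrees, and the ``$c$ even versus $c$ odd'' parity of the slope numerator further distinguishes the Case~1 (square-root extracting) versus Case~2 scenarios when transferring the factorization of $g$ back to $f$.

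The main obstacle lies in this third step, specifically in the possibility that intermediate coefficients $A_{n-1},\ldots,A_{n-t}$ have $v_p$ equal to $0$, causing the Newton polygon to have several vertices within $[n - t,\, n]$ whose slopes interleave between the shelf height $r$ and the endpoint height $v_p(a_n)$. Each such configuration must be ruled out individually by a lattice-point enumeration in the polygon, using the precise arithmetic constraint imposed by $p > k + 2$ on the denominators of the resulting slopes. This is in the spirit of the Newton polygon arguments in the second author's prior works \cite{fi, af, af2}, where the analogous combinatorial geometry step was the heart of the proof.
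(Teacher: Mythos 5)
Your overall instinct --- attack the hypothetical factor with the Newton polygon of the cleared-denominator polynomial at the prime $p$, using Dumas' splitting theorem --- matches the paper's method. But there are two genuine problems with the proposal.

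First, the reduction to $G(y)$ via $y = x^2$ is unnecessary and not fully justified. You claim that a degree-$k$ factor $h(x)$ of $f(x)$ gives a degree-$k$ factor of $g(y) = \sum a_j y^j/u_{2j+2}$ by forming $h(x)h(-x)$. But $h(x)$ and $h(-x)$ both divide $f(x)$, and unless you show $\gcd\bigl(h(x),\,h(-x)\bigr)$ is constant, you only get that their least common multiple divides $f(x)$, not their product. Even if this were repaired, you are now burdened with ruling out factor degrees $(k+1)/2$, $k$, and $k+1$ in a polynomial of degree $n$. The paper sidesteps all of this by working directly with $F(x) = u_{2n+2}f(x)$ as a polynomial in $x$ of degree $2n$ whose odd-index coefficients vanish; the Newton polygon of $F(x)$ with respect to $p$ then has spots $\bigl(2n-i,\,\nu_p(c_i)\bigr)$ with $i$ even, and one directly rules out a degree-$k$ or $(k{+}1)$ factor of $F$.

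Second, and more seriously, you explicitly leave the central step unresolved, describing it as "the main obstacle" requiring "lattice-point enumeration" on a case-by-case basis. This is where the actual proof lives, and it admits a short, uniform argument you have not found. The paper's key computation is a clean bound on the rightmost slope of the Newton polygon of $F$: using $\nu_p(c_0) - \nu_p(c_{2j}) \le \nu_p(u_{2j+2}) \le \nu_p\bigl((2j+1)!\bigr)$ and the standard estimate $\nu_p\bigl((2j+1)!\bigr) < (2j+1)/(p-1)$, one shows every edge has slope strictly less than $p/(p-1)^2 < 1/(k+1)$ (the last inequality uses $p \ge k+4$). Then two observations finish: (i) any positive-slope edge connecting two lattice points therefore has horizontal length strictly greater than $k+1 \ge \deg g$, so no translate of an edge of $g$ fits among the positive-slope edges; and (ii) since $p^r \mid c_i$ for all even $i \le 2n-k$ while $p^r \nmid c_{2n} = a_n$, all spots with abscissa $\ge k$ lie on or above the positive-slope edges, so the nonpositive-slope edges span at most $k-1 < \deg g$ horizontal units and cannot supply a full set of translated edges of $g$ either. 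This two-sided squeeze is what your proposal is missing, and it requires no case analysis on the intermediate valuations $\nu_p(A_{n-1}),\ldots,\nu_p(A_{n-t})$ at all.
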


\noindent
\textit{Comment.} \ Lemma \ref{3l1} implies that if $f(x)$ has a 
factor of degree
$k$ or $k+1$, then
\[
\prod_{\substack{p^r \Vert ( (2n+1)(2n-1) \cdots (2n-k+2) ) \\  p \ge
 k+4 } } p^r \quad \text{ divides } \quad a_n,
 \]
 where the notation $p^{r} \Vert m$ denotes that $p^{r} \mid m$ and $p^{r+1} \nmid m$.

\begin{lemma}\label{3l2}
Let $n$ be a sufficiently large integer, and let $k$ be an odd integer in 
$[5,n]$. Then 
\[
\prod_{\substack{p^r \Vert ( (2n+1)(2n-1) \cdots (2n-k+2) ) \\  p \ge
 k+4 } } p^r>2n+ 1.
 \]
\end{lemma}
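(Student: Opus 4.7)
The plan is to lower-bound the product
\[
S := \prod_{\substack{p^{r}\Vert P,\ p\geq k+4}} p^{r}, \qquad P := \prod_{i=0}^{h}(2n+1-2i),\ h:=(k-1)/2,
\]
and show $S > 2n+1$ uniformly in $k\in[5,n]$ for $n$ sufficiently large. The algebraic starting point is the identity
\[
P\cdot\binom{n}{h} \;=\; \binom{2n+1}{k}\cdot u_{k}, \qquad u_{k}:=1\cdot 3\cdots k = \frac{k!}{2^{h}h!},
\]
verified by separating odd from even factors in the ascending factorial $(2n+1)(2n)\cdots(2n-k+2)=\binom{2n+1}{k}\,k!$. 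Since every prime factor of $u_{k}$ is at most $k$, one has $v_{p}(P)=v_{p}\binom{2n+1}{k}-v_{p}\binom{n}{h}$ for each $p\geq k+4$; combined with Kummer's bound $p^{v_{p}(\binom{m}{j})}\leq m$, the part of $\binom{2n+1}{k}$ supported on primes $\leq k+2$ is at most $(2n+1)^{\pi(k+2)}$, whence
\[
S \;\geq\; \frac{P}{u_{k}\,(2n+1)^{\pi(k+2)}}.
\]

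I would then split into three ranges of $k$. For moderate $k$, namely $19\leq k\leq n/e$, one has $\pi(k+2)\leq(k-3)/2$, and the displayed bound combined with $\log P\geq\tfrac{k+1}{2}\log(n+2)$, the Stirling estimate $\log u_{k}=\tfrac{k}{2}\log(k/e)+O(\log k)$, and the prime number theorem $\pi(k+2)\sim k/\log k$ gives $\log S>\log(2n+1)$ for $n$ sufficiently large. For $k$ close to $n$ (say $k>n/e$) this elementary bound degenerates, but every odd prime $p\in[2n-k+2,2n+1]$ divides itself --- a factor of $P$ --- so $\log S\geq\theta(2n+1)-\theta(2n-k+1)=k\,(1+o(1))$ by the prime number theorem in short intervals (applicable since $k/n$ is bounded below), which dominates $\log(2n+1)$. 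Finally, for the seven odd values $k\in\{5,7,9,11,13,15,17\}$ --- where $\pi(k+2)\geq(k-1)/2$ and the elementary bound cannot succeed --- the polynomial $P_{k}(x)=\prod_{i=0}^{(k-1)/2}(2x+1-2i)$ has degree $(k+1)/2\geq 3$ with distinct roots, and Mahler's theorem (applied to the associated binary form) gives that the part of $P_{k}(n)$ coprime to any finite set of primes exceeds $|P_{k}(n)|^{1-\varepsilon}$ for $n$ large, hence exceeds $2n+1$ once $n\geq n_{0}(k)$. Taking $n_{0}$ to be the maximum of these seven thresholds together with the effective ones from the other two cases yields the lemma.

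The principal obstacle is this last case: Mahler's theorem is ineffective, so no explicit value of $n_{0}$ can be produced. This is precisely what forces the ``for all but finitely many pairs'' qualifier in Theorem~\ref{themainthm}, as remarked in the paper's introduction.
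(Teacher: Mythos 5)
Your decomposition via the identity $P\binom{n}{h}=\binom{2n+1}{k}u_k$ and the Kummer bound is a legitimate alternative to the paper's Lemma~\ref{T}, and your treatment of the two larger ranges of $k$ is sound in outline: the elementary bound (with Stirling for $\log u_k$ and the PNT for $\pi(k+2)$) does yield the result for $19\le k\le n/e$, and the prime-counting argument $\log S\ge\theta(2n+1)-\theta(2n-k+1)\sim k$ is a clean substitute for the paper's appeal to Huxley's prime-gap theorem in Case~1. That your elementary range starts at $k=19$ rather than the paper's $k=13$ is a consequence of your cruder bound (your denominator carries $(2n+1)^{\pi(k+2)}$, whereas the paper's Lemma~\ref{T}, by removing one element of $T$ per small prime, gets a denominator $\alpha_k$ that is independent of $n$); this is a real trade-off, not an error, since you compensate by enlarging the list of small $k$ handled separately.

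The genuine gap is in that list of small $k$. You claim that for $k\in\{5,7,9,11,13,15,17\}$, Mahler's theorem applied to the binary form associated to $P_k(x)=\prod_{i=0}^{(k-1)/2}(2x+1-2i)$ gives that the part of $P_k(n)$ coprime to a fixed modulus exceeds $|P_k(n)|^{1-\varepsilon}$. That is not what Mahler's theorem says, and it is far stronger than what is available. Even in the degree-two case cited in the paper (Lemma~\ref{mahler}), the conclusion for $m(m+a)$ is that the coprime part is $\ge m^{1-\varepsilon}$, which is roughly $|m(m+a)|^{1/2-\varepsilon}$, not $|m(m+a)|^{1-\varepsilon}$. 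In fact the paper's own argument for $k=5$ only extracts a coprime part $\ge(2n-1)^{9/8}\approx|P_5(n)|^{3/8}$, and it takes a clever two-fold application of the degree-two Mahler result to $(2n+1)(2n-1)$ and $(2n-1)(2n-3)$ to get even that. For $k\in\{7,9,11\}$ the paper does not use Mahler at all; it uses Thue's theorem on $ax^3+by^3=c$ to show that the bad configurations (where all but one element of $T$ are prime powers supported on the small primes) occur for only finitely many $n$. Your proposal therefore does not actually establish the lemma for $k\in\{5,\dots,17\}$: the cited theorem does not deliver the claimed exponent, and without it those seven cases are open. If you want to keep your elementary framework, the fix is to replace the single appeal to Mahler by the paper's pair of arguments (Thue for $7\le k\le 11$ — and your framework would need to extend this to $k\in\{13,15,17\}$ — together with the two-step degree-two Mahler trick for $k=5$), or alternatively to sharpen the elementary bound (as the paper does with Lemma~\ref{T}) so that it covers $k\ge 13$, leaving only $k\in\{5,7,9,11\}$ to the Diophantine machinery.
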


Observe that once the above lemmas are established, we can deduce the following consequences of these lemmas.
\begin{itemize}[leftmargin=15pt]
\item If $n$ is a sufficiently large integer and $0 < |a_n| \le 2n+1$, then
$f(x)$ cannot have a factor with
degree $\ell $ in $[5,n]$.
\item If $0 < |a_n| < k''$ where $k''$ is the integer such that 
$(2n+1)(2n-1)=k'' 3^u5^v$ with
$u \ge 0$, $v \ge 0$ and $(k'', 15)=1$, then $f(x)$ cannot have a 
cubic or quartic factor.
\item If $0 < |a_n| < k'$ where $k'$ is the integer such that 
$2n+1=k' 3^u$ with
$u \ge 0$ and $(k', 3)=1$, then $f(x)$ cannot have a linear or quadratic factor.
\end{itemize}

\noindent 
Taking $M = \min\{k',k''\}$ as in Theorem~\ref{themainthm} and noting $k' \le 2n+1$, we obtain that if
$n$ is sufficiently large and 
$0 < |a_n| < M$, then $f(x)$ is irreducible, 
completing the proof of Theorem~\ref{themainthm}.

\section{Proof of Lemma \ref{3l1}}
In this section, we use Newton polygons to show that if there is a prime
$p \ge k+4$ and a positive integer $r$ such that $p^r \mid \big( (2n+1)(2n-1) \cdots
(2n-k+2) \big)$ but $p^r \nmid a_n$, then $f(x)$ cannot have a factor of 
degree $k$ or $k+1$.  Here $k$ is an odd positive integer $\le n$.
Before proceeding with the proof, we give some notation 
and background information on Newton polygons that will be useful.

If $p$ is a prime and $m$ is a nonzero integer, we define $\nu(m)= \nu_p(m)$ to be the 
nonnegative integer such that $p^{\nu(m)} \mid m$ and $p^{\nu(m)+1} \nmid m$.
Let $w(x)=\sum_{j=0}^{n} u_j x^{j} \in \mathbb{Z} [x]$ with $u_n u_0 \ne 0$, 
and let $p$ be a prime.  Set
\[
S= \{ \big( n-i,\nu(u_{i}) \big):  u_{i} \ne 0, 0 \le j \le n \}.
\]
Consider the lower edges along 
the convex hull of these points.  The left-most endpoint is $\big(0,\nu(u_n)\big)$ 
and the right-most endpoint is $\big(n,\nu(u_0)\big)$.  The endpoints of each edge 
belong to $S$, and the slopes of the edges increase from left to right.
When referring to the ``edges'' of a Newton polygon, we shall not allow
two different edges to have the same slope.  
The polygonal path formed by these edges is called the Newton polygon of 
$w(x)$ with respect to the prime $p$.  We will refer to the points in 
$S$ as spots of the Newton polygon. 

In investigating irreducibility with Newton polygons, we will make use of 
the following lemma.

\begin{lemma}[G.~Dumas \cite{d}]\label{NPlem}
Let $g(x)$ and $h(x)$ be in $\mathbb{Z} [x]$ with $g(0)h(0)\ne0$, and let 
$p$ be a prime.  Let $k$ be a non-negative integer such that $p^k$ divides 
the leading coefficient of $g(x)h(x)$ but $p^{k+1}$ does not.  Then the edges 
of the Newton polygon for $g(x)h(x)$ with respect to $p$ can be formed by 
constructing a polygonal path beginning at $(0,k)$ and using translates of 
the edges in the Newton polygon for $g(x)$ and $h(x)$ with respect to the 
prime $p$ (using exactly one translate for each edge).  Necessarily, the 
translated edges are translated in such a way as to form a polygonal path 
with the slopes of the edges increasing.      
\end{lemma}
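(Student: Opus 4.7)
The plan is to prove the lemma by reducing to a statement about the $p$-adic valuations of roots of $g(x)h(x)$ in the algebraic closure $\overline{\mathbb{Q}_{p}}$.

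First, I would establish the standard \emph{roots interpretation} of the Newton polygon. For any polynomial $w(x)=\sum_{j=0}^{n}u_{j}x^{j}\in\mathbb{Z}[x]$ with $u_{0}u_{n}\neq 0$, extend $\nu_{p}$ to a valuation on $\overline{\mathbb{Q}_{p}}$ and factor $w(x)=u_{n}\prod_{i=1}^{n}(x-\alpha_{i})$, sorting so that $\nu_{p}(\alpha_{1})\leq\nu_{p}(\alpha_{2})\leq\cdots\leq\nu_{p}(\alpha_{n})$. The claim is that the Newton polygon of $w(x)$ starts at $(0,\nu_{p}(u_{n}))$, ends at $(n,\nu_{p}(u_{0}))$, and its edges record the multiset of valuations of the roots: an edge of slope $s$ and horizontal length $\ell$ occurs in the polygon if and only if exactly $\ell$ of the $\alpha_{i}$ satisfy $\nu_{p}(\alpha_{i})=s$. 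This is proved from the identity $u_{n-i}=(-1)^{i}u_{n}e_{i}(\alpha_{1},\ldots,\alpha_{n})$, where $e_{i}$ is the $i$-th elementary symmetric polynomial. The ultrametric inequality gives $\nu_{p}(u_{n-i})-\nu_{p}(u_{n})\geq\sum_{j=1}^{i}\nu_{p}(\alpha_{j})$, and equality holds at those indices $i$ for which $\nu_{p}(\alpha_{i})<\nu_{p}(\alpha_{i+1})$, because there the subset $\{1,\ldots,i\}$ is the unique minimizer of $\sum_{j\in I}\nu_{p}(\alpha_{j})$ over $|I|=i$.

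With the roots interpretation in hand, the lemma itself is essentially formal. Factor $g(x)=c\prod_{i}(x-\alpha_{i})$ and $h(x)=d\prod_{j}(x-\beta_{j})$ over $\overline{\mathbb{Q}_{p}}$. Then $g(x)h(x)=cd\prod_{i}(x-\alpha_{i})\prod_{j}(x-\beta_{j})$, the leading coefficient of $g(x)h(x)$ has $\nu_{p}(cd)=\nu_{p}(c)+\nu_{p}(d)=k$ by hypothesis, and the multiset of $p$-adic valuations of the roots of $g(x)h(x)$ is the disjoint union of those for $g(x)$ and $h(x)$. Applying the roots interpretation to all three polynomials, the edges of $g(x)h(x)$'s Newton polygon (with their slopes and horizontal lengths) are obtained by taking each edge of $g$'s polygon and each edge of $h$'s polygon, translating them, and arranging them as a polygonal path in order of increasing slope, with starting point $(0,k)$. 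When $g$ and $h$ contribute edges of the same slope, the translates are collinear and together form a single longer edge of that slope in $g(x)h(x)$'s polygon, consistent with the convention that edges have distinct slopes.

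The main obstacle is the roots-interpretation step. The lower bound via the ultrametric inequality is immediate, but establishing equality at the vertices requires a careful sorting argument: at indices $i$ for which $\nu_{p}(\alpha_{i})<\nu_{p}(\alpha_{i+1})$, one must verify that every subset $I\neq\{1,\ldots,i\}$ of size $i$ satisfies $\sum_{j\in I}\nu_{p}(\alpha_{j})>\sum_{j=1}^{i}\nu_{p}(\alpha_{j})$, which follows by trading any removed index $k\leq i$ for an added index greater than $i$ and noting the net change in valuation is at least $\nu_{p}(\alpha_{i+1})-\nu_{p}(\alpha_{i})>0$. Once this technical point is handled, the lemma follows quickly from the facts that the roots of a product are the union of the roots of its factors and that valuations of leading coefficients add.
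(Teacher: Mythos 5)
The paper does not prove this lemma at all: it is quoted as a classical result of Dumas with a citation to \cite{d}, so there is no internal proof to compare against. Your argument, however, is a correct and complete proof strategy. The roots interpretation is the standard valuation-theoretic proof of Dumas's theorem: extending $\nu_p$ to $\overline{\mathbb{Q}_p}$, the identity $u_{n-i}=(-1)^iu_ne_i(\alpha_1,\dots,\alpha_n)$ together with the ultrametric inequality shows the piecewise-linear convex function $i\mapsto \nu_p(u_n)+\sum_{j\le i}\nu_p(\alpha_j)$ minorizes all spots and touches them at $i=0$, $i=n$, and every index where $\nu_p(\alpha_i)<\nu_p(\alpha_{i+1})$; hence it is the lower convex hull, and the edge of slope $s$ has horizontal length equal to the number of roots of valuation $s$ (note this matches the paper's orientation convention, which places the coefficient $u_i$ at abscissa $n-i$). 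You correctly identify and handle the one delicate point, namely that $\{1,\dots,i\}$ is the \emph{unique} minimizing subset at such indices (via the exchange argument), which is what upgrades the ultrametric inequality to an equality there. The deduction for the product is then immediate from additivity of $\nu_p$ on leading coefficients and the union of root multisets, and you correctly note that edges of $g$ and $h$ sharing a slope merge into a single edge of $gh$, consistent with the paper's convention that distinct edges have distinct slopes. For contrast, Dumas's original argument (and the usual ``elementary'' treatment) works purely at the level of coefficients, analyzing $\nu_p$ of the convolution defining the coefficients of $gh$ and locating the vertices of the product polygon combinatorially; that route avoids passing to $\overline{\mathbb{Q}_p}$ but is longer, whereas yours buys brevity at the cost of importing the extension of $\nu_p$ to the algebraic closure.
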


\begin{proof}[Proof of Lemma \ref{3l1}.]

Let
\[
F(x)=u_{2n+2}f(x)= \sum_{j=0}^{n}a_j \frac{u_{2n+2}}{u_{2j+2}}x^{2j}=
\sum_{j=0}^{n}b_jx^{2j},
\] 
where 
\[
b_j=a_j \frac{u_{2n+2}}{u_{2j+2}} = a_j (2n+1)(2n-1) \cdots (2j+3).
\]
Note that the coefficients of the odd powers of $x$ in $F(x)$ are all zero.
Writing these terms into $F(x)$, we set
\[
F(x)=\sum_{j=0}^{n}b_jx^{2j}= \sum_{i=0}^{2n} c_i x^i.
\]
If $i$ is odd, then $c_i = 0$.  On the other hand, if $i$ is even, then
$i=2j$ for some $j \in \{ 0, 1, \dots, n \}$ and $c_i = c_{2j} = b_j$.
We consider the Newton polygon of $F(x)$ with respect to the prime $p$.
The spots of the Newton polygon are 
\[ 
\big\{ \big(2n-i, \nu(c_i)\big): c_{i} \ne 0, 0 \le i \le 2n \big\}.
\]  
From the  conditions that $k$ is odd and 
$p^r \mid \big( (2n+1)(2n-1) \cdots  (2n-k+2) \big)$,
we deduce that $p^r \mid c_i$ for $i \in \{ 0,1,\ldots, 2n-(k+1), 2n-k \}$.
Thus, the right-most spots in 
\[
\mathcal R = \big\{ \big(2n - i, \nu(c_{i})\big) : c_{i} \ne 0, 0 \le i \le 2n-k \big\},
\] 
associated with the Newton polygon of $F(x)$ with respect to $p$, have $y$-coordinates $\ge r$. 
Since $p^r \nmid a_n$ and $c_{2n} = b_n = a_n$, we have $p^r \nmid c_{2n}$.  
Thus, the left-most endpoint of the Newton polygon of $F(x)$ with respect to $p$, 
which is $\big(0, \nu(c_{2n})\big)=\big(0, \nu(a_{n})\big)$, has $y$-coordinate $<r$.
Since the slopes of the edges of a Newton polygon increase
from left to right, the spots in $\mathcal R$ all lie on or above edges of the 
Newton polygon of $F(x)$ with respect to $p$ which have a positive slope.  
We will show next that each of these positive slopes is $< 1/(k+1)$ by showing that the 
right-most edge has slope $< 1/(k+1)$.  

Observe that the slope of the right-most edge of the Newton polygon of $F(x)$ with respect to $p$ is given by
\[
\max_{1\le j \le n}\bigg\{ \frac{\nu(c_0)-\nu(c_{2j})}{2j}\bigg\}
= \max_{1\le j \le n}\bigg\{ \frac{\nu(a_0u_{2n+2})-\nu(a_ju_{2n+2}/u_{2j+2})}{2j}\bigg\}.
\]
Since $|a_{0}| = 1$, we have $\nu(a_0u_{2n+2})= \nu(u_{2n+2})$.  
Also, for $1 \le j \le n$, we see that
\[
\nu\Big(a_j \frac{u_{2n+2}}{u_{2j+2}}\Big) \ge \nu\Big(\frac{u_{2n+2}}{u_{2j+2}}\Big).
\]
Thus,
\begin{align*}
\nu(a_0u_{2n+2})-\nu \Big(a_j \frac{u_{2n+2}}{u_{2j+2}} \Big) 
&\le \nu(u_{2n+2}) -\nu \Big( \frac{u_{2n+2}}{u_{2j+2}} \Big ) \\[5pt]
&= \nu(u_{2j+2}) \le \nu((2j+1)!).
\end{align*}
Thus, the right-most slope is
\begin{equation}\label{lem1eq1}
\max_{1\le j \le n}\bigg\{ \frac{\nu(c_0)-\nu(c_{2j})}{2j}\bigg\}
\le \max_{1\le j \le n}\bigg\{ \frac{\nu((2j+1)!)}{2j} \bigg\}.
\end{equation}

To estimate the right-hand side of \eqref{lem1eq1} further, we consider the two cases
$j < (p-1)/2$ and $j \ge (p-1)/2$.  Suppose first $j < (p-1)/2$.  Then $2j+1 < p$.  Since $p$
is a prime, we see that $p \nmid (2j+1)!$.  Therefore,
$\nu((2j+1)!)=0$.  Therefore, 
\begin{equation}\label{lem1eq2}
\frac{\nu((2j+1)!)}{2j}=0
\qquad \text{for $j < (p-1)/2$}.
\end{equation}
Now, suppose $j \ge (p-1)/2$.  
In this case, we have
\[
\nu((2j+1)!) = \sum_{i=1}^{\infty} \bigg\lfloor \dfrac{2j+1}{p^{i}} \bigg\rfloor
< \sum_{i=1}^{\infty} \dfrac{2j+1}{p^{i}} 
= \dfrac{2j+1}{p-1}.
\]
Since $j\ge (p-1)/2$ implies $1/(2j) \le 1/(p-1)$, we obtain
\begin{equation}\label{lem1eq3}
\begin{aligned}
\frac{\nu((2j+1)!)}{2j} 
&<  \frac{2j+1}{2j} \cdot \frac{1}{p-1} 
= \bigg( 1 + \dfrac{1}{2j} \bigg) \frac{1}{p-1} \\[5pt]
&\le \bigg( 1 + \dfrac{1}{p-1} \bigg) \frac{1}{p-1}
= \frac{p}{(p-1)^2}
\qquad \text{for $j \ge (p-1)/2$}.
\end{aligned}
\end{equation}
Combining \eqref{lem1eq1}, \eqref{lem1eq2} and \eqref{lem1eq3}, we deduce the right-most slope is
\[
\max_{1\le j \le n}\bigg\{ \frac{\nu(c_0)-\nu(c_{2j})}{2j}\bigg\}
< \dfrac{p}{(p-1)^2}.
\]  
Recalling that we have the condition $p \ge k+4$ in the statement of Lemma~\ref{3l1}, one can verify that
$p/(p-1)^{2} < 1/(k+1)$.  
Therefore, the slope of the right-most edge is $< 1/(k+1)$, and we deduce that 
each edge of the Newton polygon of $F(x)$ with respect to $p$ has slope $< 1/(k+1)$.

Now, assume $F(x)$ has a factor $g(x) \in \mathbb{Z}[x]$ with $\deg g \in \{ k, k+1 \}$.  
We will show that the translates of all 
the edges of the Newton polygon of $g(x)$ with respect to $p$ cannot be
found among the edges of the Newton polygon of $F(x)$ with respect to $p$.
This will imply a contradiction to Lemma~\ref{NPlem}.  Hence, it will follow
that $F(x)$ cannot have a factor of degree $k$.

First, we show that no translate of an edge for $g(x)$ can be found among 
those edges in the Newton polygon of $F(x)$ having positive slope.  Suppose
$(a,b)$ and $(c,d)$ with $a<c$ are two lattice points on an edge of the 
Newton polygon of $F(x)$ having positive slope.  We know the slope is 
$< 1/(k+1)$; therefore, 
\[ 
\frac{1}{c-a} \le \frac{d-b}{c-a} < \frac{1}{k+1}.
\]
Thus, $c-a > k+1 \ge \deg g$ so that $(a,b)$ and $(c,d)$ cannot
be the endpoints of a translated edge of the Newton polygon of $g(x)$;
therefore, the translates of the edges of the Newton polygon of $g(x)$ 
with respect to $p$ must be among the edges of the Newton polygon of $F(x)$
having 0 or negative slope.  

Next, we show that not all the  translates of the edges for $g(x)$ can
be found among the edges of the Newton polygon of $F(x)$ having $0$ or negative slope.  
Recall that the spots in $\mathcal R$ all lie on or above edges of the 
Newton polygon of $F(x)$ with respect to $p$ which have a positive slope.  
Thus, the spots forming the endpoints of the edges of the 
Newton polygon of $F(x)$ having $0$ or negative slope must be among the spots
$\big(2n - i, \nu(c_{i})\big)$ where $2n-k+1 \le i \le 2n$.  Since 
$2n - (2n-k+1) = k-1 < \deg g$, these edges by themselves cannot consist of a complete
collection of translated edges of the Newton polygon of $g(x)$ with respect to $p$, and we
have a contradiction.  

Therefore, $F(x)$ cannot have a factor with degree $k$ or $k+1$.
\end{proof}

\section{Proof of Lemma~\ref{3l2}}
Let $n$ be sufficiently large, and let $k$ be an odd integer in $[5,n]$.  Write
\[
(2n+1)(2n-1) \cdots (2n-k+2)=uv,
\]
where all the prime factors of $u$ are $\le k+2$ and all the prime factors
of $v$ are $> k+2$.  Then 
\[
v = \prod_{\substack{p^r \Vert ( (2n+1)(2n-1) \cdots (2n-k+2) ) \\ p \ge
 k+4 }} p^r.
 \]
To establish Lemma~\ref{3l2}, we want to show $v > 2n+1$.
We begin by establishing Lemma~\ref{3l2} when $k \ge 13$, 
and then handle other values of $k \ge 5$.  

\begin{lemma}\label{3l3}
For $n$ a sufficiently large integer and $13 \le k \le n$, 
\[
\prod_{\substack{p^r \Vert ( (2n+1)(2n-1) \cdots (2n-k+2) ) \\  p \ge  k+4 }} p^r>2n+ 1.
 \]
\end{lemma}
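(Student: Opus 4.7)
I write $Q=(2n+1)(2n-1)\cdots(2n-k+2)=uv$, where $u$ is the divisor of $Q$ all of whose prime factors lie in $[3,k+2]$ and $v$ is the complementary divisor, so that every prime dividing $v$ is $\ge k+4$ (the only excluded prime $k+3$ is even and cannot divide the odd product $Q$). Then $v>2n+1$ is equivalent to
\[
\log Q-\log u>\log(2n+1),
\]
so the task reduces to producing matching upper and lower estimates for $\log u$ and $\log Q$.

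\textbf{Bounding $\log u$ from above.} Fix an odd prime $p\le k+2$. The $m=(k+1)/2$ factors $2n+1-2j$ form an arithmetic progression of common difference $2$, and since $\gcd(2,p)=1$ the number of these terms divisible by $p^i$ is at most $\lfloor m/p^i\rfloor+1$. Summing over $i$ with $p^i\le 2n+1$ gives the standard bound
\[
\nu_p(Q)\le \frac{m}{p-1}+\lfloor\log_p(2n+1)\rfloor.
\]
Weighting by $\log p$ and summing over $3\le p\le k+2$, I plan to apply Mertens' estimate $\sum_{p\le x}\log p/(p-1)=\log x+O(1)$ together with Chebyshev's bound $\theta(x)=O(x)$. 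A useful refinement is to split the $p$-range at $\sqrt{2n+1}$ (where $\lfloor\log_p(2n+1)\rfloor$ falls to $1$) and at $p=m$ (where $\lfloor m/p\rfloor$ vanishes), so that the dominant contribution only comes from primes $p\le\sqrt{2n+1}$.

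\textbf{Bounding $\log Q$ from below.} A direct integral comparison of $\log Q=\sum_{j=0}^{m-1}\log(2n+1-2j)$ yields an estimate sharper than the trivial $\log Q\ge m\log(2n-k+2)$; this sharper form is essential when $k$ is of order $n$.

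\textbf{The main obstacle.} The delicate regime is when $k$ is close to $n$, because then both $\log Q$ and the above upper bound for $\log u$ have the same leading term of size $\tfrac{m}{2}\log n$, and the inequality has to be extracted by comparing the lower-order $O(n)$ terms. The hypothesis $k\ge 13$ is precisely what makes these constants line up favourably: enough small primes are available so that the Mertens contribution to $\log u$ stays strictly below the integral-comparison constant for $\log Q$. Once this is arranged, $\log Q-\log u$ is bounded below by a positive multiple of $n$, which certainly exceeds $\log(2n+1)$ for $n$ large. The excluded cases $k\in\{5,7,9,11\}$ fail this constant comparison and are treated separately in the remainder of the proof of Lemma~\ref{3l2}.
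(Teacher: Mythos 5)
Your overall line of attack is genuinely different from the paper's, and I think it can be made to work, but as written it has a conceptual misstep and leaves the heavy lifting undone.

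The paper does not estimate $\log u$ by a Mertens sum at all. It splits into three regimes. For $n^{2/3}<k\le n$ it invokes a prime-gap theorem (Huxley) to produce two primes $p_1,p_2\in T$ with $p_1,p_2\ge n+4\ge k+4$, so that $v\ge p_1p_2>n^2$ — no estimate of $\log u$ is ever made. For $k_0<k\le n^{2/3}$ it applies Lemma~\ref{T} of \cite{af2}: remove from $T$ \emph{one} term for each odd prime $p\le k+2$ (the term divisible by the highest power of $p$); the surviving product has small-prime content bounded by $\bigl(\tfrac{k-1}{2}\bigr)!$, giving $v\ge (2n-k+2)^r/\alpha_k$ with $r=\tfrac{k+1}{2}-\pi(k+2)+1$. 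For $13\le k\le k_0$ it checks directly that $r\ge 2$ and uses $(2n-k+2)^2/\alpha_k>2n+1$ for $n$ large. Your plan replaces all of this by a single uniform bound $\nu_p(Q)\le \tfrac{m}{p-1}+\lfloor\log_p(2n+1)\rfloor$ weighted by $\log p$ and summed.

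Two issues. First, your diagnosis of where $k\ge13$ is used is wrong, and this matters because it is the one place you claim the hypothesis does work. When $k$ is comparable to $n$, the inequality $\log Q-\log u>\log(2n+1)$ holds (granting your sharpened estimates) with a margin that is a fixed positive constant times $n$, and this constant does not depend on whether $k\ge 13$ or $k\ge 5$; it depends only on the Mertens-type constants and your integral comparison for $\log Q$. The hypothesis $k\ge 13$ is what rescues the \emph{opposite} end, namely $k$ fixed and $n\to\infty$: there your bound reduces to
\[
\log Q-\log u\ \gtrsim\ \bigl(m-\pi_{\mathrm{odd}}(k+2)\bigr)\log(2n+1)-C_k
      \ =\ \bigl(r-1\bigr)\log(2n+1)-C_k
\]
with $m=(k+1)/2$, and this exceeds $\log(2n+1)$ for $n$ large precisely when $r\ge 2$ — exactly the condition $k\ge 13$ used by the paper's Case 3. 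So the role of the hypothesis in your scheme is the same combinatorial count the paper makes explicit, not a constant coincidence in the large-$k$ regime.

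Second, the large-$k$ regime is where your approach is genuinely hard. The crude bound $\nu_p(Q)\le \tfrac{m}{p-1}+\lfloor\log_p(2n+1)\rfloor$ summed over \emph{all} odd $p\le k+2$ gives, for $k\sim n$, an estimate $\log u\lesssim \tfrac n2\log n + c_1 n$ with $c_1$ strictly larger than the $O(n)$-coefficient $\log 2-\tfrac12\approx 0.19$ in $\log Q$, so without the refinements it concludes nothing. You do correctly flag the needed refinements (splitting at $\sqrt{2n+1}$ and at $m$, using that primes in $(m,k+2]$ divide at most one term, and using that primes in $(\sqrt{2n+1},k+2]$ do so to the first power only); when I carry these out I get a margin of roughly $0.17n$ for $k=n$, so your plan appears viable. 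But the margin is a small constant times $n$ whose sign depends on Mertens-type constants that you do not track, and you would additionally need to verify positivity uniformly over all $\alpha=k/n\in(0,1]$. This is precisely the delicacy the paper's prime-gap argument sidesteps: once $k>n^{2/3}$, two primes $>n$ in $T$ settle the matter without any bookkeeping. In short: a workable but harder path, with the role of $k\ge 13$ misattributed and the essential computations not actually done.
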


\begin{proof}
Let $T = \{ 2n+1, 2n-1, \dots , 2n-k+2 \}$.
In establishing Lemma \ref{3l3}, we take $n$ to be sufficiently large and
break up the argument into 3 cases depending on the size of $k$.

\vskip 8pt\noindent
\textit{Case 1: \ $n^{\frac{2}{3}} < k \le n$.}

We use the following lemma which follows from results on gaps between 
primes (for example, see \cite{gaps}).

\begin{lemma}\label{3lfix}
For $n$ sufficiently large and $n^{\frac{2}{3}} < k \le n$, there is a prime
in each of the intervals
\[
I_{1} = \bigg(2n - k +2, 2n + 1 - \frac{k-1}{2} \bigg] \quad \text{and} 
    \quad I_{2} = \bigg(2n+1 - \frac{k-1}{2}, 2n+1 \bigg].
    \]
\end{lemma}

By Lemma \ref{3lfix}, there exist primes $p_{1} \in I_{1}$ and $p_{2} \in 
I_{2}$.  Since 
\[ 
2n + 1 \ge p_{2} > p_{1} > 2n - k + 2,
\]
$p_{1}$ and $p_{2}$ are in $T$.
Since $k$ is odd, $p_{1} \ge 2n - k + 4$.
Thus (since $k \le n$), both $p_{1}$ and $p_{2}$ are $ \ge n + 4 \ge k + 4$.
From the definition of $v$, we deduce
\[
 v \ge p_{1} p_{2} > n^{2} > 2n+1.
 \]
This establishes Lemma \ref{3l3} for 
$n^{\frac{2}{3}} < k \le n$.

\vskip 8pt\noindent
\textit{Case 2: \ $k_{0} < k \le n^{\frac{2}{3}}$ 
for some fixed but sufficiently large $k_{0}$.}

For this case and indirectly later, we will make use of the following lemma which was established in \cite[Lemma~5]{af2}.

\begin{lemma}\label{T}
Let $m$, $\ell$ and $k$ denote positive integers with $k \ge 2$, and let
\[
T = \{ 2m+1, 2m+3, \dots, 2m+2\ell -1 \}.
\] 
For each odd prime $p \le k$ in turn,
remove from $T$ a number divisible by $p^e$ where $e = e(p)$ is as large as possible. 
Let $S$ denote the set of numbers that are left.  
Let $N_p$ be the exponent in the largest power of $p$ dividing $\prod_{t \in S} t$.
Then
\[
\prod_{p > k}p^{N_p} = 
\frac{\prod_{t \in S}t}{\prod_{2 < p \le k}p^{N_p}} \ge  
\frac{(2m+1)^{\ell - \pi(k) + 1}}{(\ell -1 )!}
\cdot 2^{\nu_2 (( \ell -1 )) ! }.
\]
\end{lemma}

For the moment, we only consider $1 \le k \le n^{\frac{2}{3}}$.  
To apply Lemma~\ref{T}, 
for each odd prime $p \le k+2$, consider a number in $T = \{ 2n+1, 2n-1, \dots , 2n-k+2 \}$ which is divisible
by $p^e$ where $e = e(p)$ is as large as possible.  Let $a_p$ denote such a number 
divisible by $p^e$.  Note that some of these numbers may be the same.
Dispose of all these numbers, and let $S$ denote the set of numbers in $T$ that are
left. 

Let $N_p$ be the exponent of the largest power of $p$ dividing
$\prod_{m \in S} m$.  Observe that
\[
v \ge \prod_{p > k+2}p^{N_p}.
\]
\noindent
By Lemma \ref{T} (with $2m + 1 = 2n - k + 2$, 
$\ell = (k+1)/2$, and $k$ replaced by $k + 2$), we obtain
\[
v \ge \frac{(2n-k+2)^{\frac{k+1}{2} - \pi(k+2) + 1}}{\big ( \frac{k-1}{2} \big )!}
\cdot 2^{\nu_2 \left( \left(\frac{k-1}{2} \right) ! \right)}.
\]
Let $r = (k+1)/2 - \pi(k+2) + 1$, and 
\[
\alpha_{k} =  \frac{\bigl (\frac{k-1}{2} \bigr ) !}
{2^{\nu_2  \left( \left(\frac{k-1}{2} \right) ! \right)}}.
\]
One can see that $T$ consists of $(k+1)/2$ numbers, and that we have removed 
at most $\pi(k+2) -1$ of them to obtain the set $S$.   Thus,
\begin{equation}\label{sizeSboundr}
|S| \ge r.
\end{equation}
Note that $v > 2n+1$ if 
\begin{equation}\label{3e1}
(2n-k+2)^r > \alpha_{k} (2n+1).
\end{equation}
Furthermore, \eqref{3e1} holds if 
\begin{equation}\label{3ef1}
r \log(2n - k +2) - (\log \alpha_{k} + \log(2n+1)) > 0.
\end{equation}
To reduce showing $v > 2n+1$ to establishing \eqref{3e1} or \eqref{3ef1}, we used $1 \le k \le n^{2/3}$.

We show now that \eqref{3ef1} holds for $k_{0} \le k \le n^{2/3}$
to finish the proof of Lemma~\ref{3l3} in this case.
Suppose then that $k_{0} \le k \le n^{2/3}$.
By the Prime Number Theorem, since $k_{0}$ is sufficiently large and
$k > k_{0}$, we have
\[
\pi(k+2) < \frac{1}{12} (k + 2).
\]
Hence, we deduce
\begin{equation}\label{3ef3}
\begin{aligned}
r &= \frac{k+1}{2} - \pi(k+2) + 1 > \frac{k+3}{2} - \frac{1}{12} (k+2) \\[5pt]
&> \frac{5}{12}(k+2) > \frac{5}{6} \bigg ( \frac{k-1}{2} \bigg ).
\end{aligned}
\end{equation}
Since $k \le n^{\frac{2}{3}}$, we obtain
\begin{equation}\label{3ef4}
2n - k + 2 > n.
\end{equation}
Also, the definition of $\alpha_{k}$ implies
\begin{equation}\label{3ef5}
\alpha_{k} \le \bigg (\frac{k-1}{2} \bigg ) ! \le k^{\frac{k-1}{2}} \le
       n^{\frac{2}{3} \left( \frac{k-1}{2} \right)}.
\end{equation}
By \eqref{3ef3} and \eqref{3ef4}, we deduce
\begin{equation}\label{3ef6}
r \log(2n - k +2) > \frac{5}{6} \bigg ( \frac{k-1}{2} \bigg ) \log n.
\end{equation}
From \eqref{3ef5}, we obtain
\begin{equation}\label{3ef7}
\log \alpha_{k} \le \frac{2}{3} \bigg ( \frac{k-1}{2} \bigg ) \log n.
\end{equation}
Combining \eqref{3ef6} and \eqref{3ef7} with $k \ge k_{0}$, we see that
\begin{equation}\label{3ef8}
\begin{aligned}
r \log(2n - k +2) - &(\log \alpha_{k} + \log(2n+1)) \\
&\quad > \frac{1}{6} \bigg( \frac{k-1}{2} \bigg) \log n - \log (2n+1) \\
&\quad \ge  \frac{1}{6} \bigg( \frac{k_{0}-1}{2} \bigg) \log n - \log (2n+1).
\end{aligned}
\end{equation}
Since $k_{0}$ is sufficiently large,
\[
\frac{1}{6} \bigg( \frac{k_{0}-1}{2} \bigg)
  \log n - \log (2n+1) > 0.
\]
Hence, \eqref{3ef1} holds, and $v > 2n + 1$.

\vskip 8pt\noindent
\textit{Case 3: \ $13 \le k \le k_{0}$.}

We show that \eqref{3e1} holds,
which will establish Lemma~\ref{3l3} in this case.
First, we obtain a lower bound for $r$.   
Since $k \ge 13$, the value of $\pi(k+2)$ is less than or equal to 
the number of even primes plus
the number of odd numbers less than or equal to $k+2$ 
minus the number of odd numbers less than or equal to $15$ that are not prime.  Thus,
\[
\pi(k+2) \le 1 + \frac{k+3}{2} - 3 \le \frac{k+3}{2}-2.
\]
Therefore,
\[
r=\frac{k+1}{2} - \pi(k+2) + 1  \ge \frac{k+1}{2}  - 
\frac{k+3}{2} + 3 =2.
\] 
By \eqref{sizeSboundr}, we deduce that there are at least two numbers in $S$ when $k \ge 13$.
Therefore, the left-hand side of \eqref{3e1}
is $\ge n^2$.  Observe that
\[
\alpha_{k} \le \bigg(\frac{k-1}{2} \bigg)! \le \bigg 
  (\frac{k_{0}-1}{2} \bigg ) !.
\]
Since $\alpha_{k}$ is bounded by a fixed constant, depending only on $k_{0}$, 
the right-hand side of \eqref{3e1} has order
$n$.  Therefore, for sufficiently large $n$, we see that \eqref{3e1} holds and, hence, $v > 2n+1$.
\end{proof}

\begin{lemma}\label{3l4}
For $k \in \{ 7, 9, 11 \}$, we have
\[
\prod_{\substack{p^r \Vert ( (2n+1)(2n-1) \cdots (2n-k+2) ) \\  p \ge k+4 }} p^r>2n+ 1
 \]
for all but finitely many positive integers $n$.
\end{lemma}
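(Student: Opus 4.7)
The plan is to extend the argument of Lemma~\ref{3l3} by refining the bound on the $(k+2)$-smooth part of $f(n) = (2n+1)(2n-1) \cdots (2n-k+2)$, and then invoke Mahler's theorem~\cite{mahler} to handle the finitely many residual exceptions. Set $\ell = (k+1)/2$ and $T_n = \{2n-k+2, 2n-k+4, \dots, 2n+1\}$. For each odd prime $p \le k+2$, I would first observe that at most $\lfloor (\ell-1)/p \rfloor + 1$ elements of $T_n$ are divisible by $p$ and, since $p^{2} > \ell - 1$ in all three of our cases, at most one element has $p$-adic valuation $\ge 2$. Let $t_p^{\ast}$ denote the element of $T_n$ of maximum $p$-adic valuation, and write $t_p^{\ast} = p^{\nu_p(t_p^{\ast})} m_p^{\ast}$. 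Then the $(k+2)$-smooth part $u_n$ of $f(n)$ satisfies
\[
u_n = \prod_{\text{odd } p \le k+2} p^{N_p} \;\le\; C_k \prod_p p^{\nu_p(t_p^{\ast})},
\]
where $C_k = \prod_p p^{\mathrm{Count}_p - 1}$ is bounded and $\mathrm{Count}_p$ is the number of elements of $T_n$ divisible by $p$.

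A direct check shows $\pi(k+2) - 1 = (k-1)/2 = \ell - 1$ for each $k \in \{7, 9, 11\}$, so the above only gives $u_n \le C_k (2n+1)^{\ell - 1}$; this matches the target $f(n)/(2n+1) \asymp (2n)^{\ell - 1}$ up to a constant, which is precisely why Lemma~\ref{3l3} does not settle these cases. Suppose $v_n \le 2n+1$, equivalently $u_n \ge f(n)/(2n+1)$. Grouping primes by the common element $t_p^{\ast}$ and noting that $\prod_{p\,:\,t_p^{\ast} = t_i} p^{\nu_p(t_i)}$ divides $t_i$, one obtains $u_n \le C_k \prod_{i=1}^{q} t_i$, where $t_1, \dots, t_q$ are the distinct values of $t_p^{\ast}$. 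Combined with the lower bound on $u_n$, this forces $q \ge \ell - 1$ for $n$ large, so (since also $q \le \pi(k+2) - 1 = \ell - 1$) the $t_p^{\ast}$ are exactly $\ell - 1$ distinct elements of $T_n$. The same bounds then yield
\[
\prod_p m_p^{\ast} \;\le\; C_k \cdot \frac{(2n+1)^{\ell}}{f(n)} \;\le\; 2 C_k
\]
for $n$ sufficiently large, so each $m_p^{\ast}$ is bounded by an absolute constant depending only on $k$, and each $t_p^{\ast}$ is of the form $p^{a_p} m_p^{\ast}$ with $m_p^{\ast}$ bounded.

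For the conclusion, fix any two of the $t_p^{\ast}$, say $t_{p_1}^{\ast} = 2n - c_1 = p_1^{a_1} m_1$ and $t_{p_2}^{\ast} = 2n - c_2 = p_2^{a_2} m_2$ with $c_1 \ne c_2$, distinct primes $p_1 \ne p_2$, and bounded $m_1, m_2$. The polynomial $g(X) = (2X - c_1)(2X - c_2) \in \mathbb{Z}[X]$ has two distinct roots, and $g(n)/(m_1 m_2) = p_1^{a_1} p_2^{a_2}$, so the largest prime factor of $g(n)$ is bounded by a constant depending only on $k$. By Mahler's theorem, only finitely many integers $n$ have this property. Taking the union over the finitely many admissible configurations $(c_1, c_2, p_1, p_2, m_1, m_2)$ shows that the set of $n$ with $v_n \le 2n+1$ is finite for each $k \in \{7, 9, 11\}$. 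The main obstacle is the middle step, namely proving that the apparent slack in the smooth-part bound actually forces a rigid multiplicative structure on $\ell - 1$ of the elements of $T_n$; the ineffectiveness of Mahler's theorem is the reason the lemma's conclusion must be phrased as \emph{all but finitely many}~$n$.
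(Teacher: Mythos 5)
Your proposal is correct, and it takes a genuinely different route from the paper's. Both arguments rest on the same structural observation — that if the part of $(2n+1)(2n-1)\cdots(2n-k+2)$ built from primes $\ge k+4$ is as small as $2n+1$, then nearly every factor in the product must be close to a prime power — but the two proofs extract finiteness from this in different ways. The paper, following Lemma~\ref{T}, isolates a single element $a\in S$ whose $(k+2)$-smooth part is controlled and then splits into two cases: if some other element of $T$ has a prime factor $\ge k+4$, explicit numerics (done separately for $k=7,9,11$) give $v>2n+1$; otherwise all of $T\setminus\{a\}$ is $(k+2)$-smooth, and one shows two of these elements are \emph{exact} prime powers $p_1^u$, $p_2^v$, after which $|p_1^u-p_2^v|=\ell$ is reduced to a cubic Thue equation (Lemma~\ref{Thue}) with finitely many solutions. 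You instead argue more uniformly: assuming $v_n\le 2n+1$ and using $\pi(k+2)-1=\ell-1$, you force the $\ell-1$ maximal-valuation elements $t_p^\ast$ to be distinct and of the form $p^{a_p}m_p^\ast$ with $\prod_p m_p^\ast\le 2C_k$, then feed two of them into the paper's Lemma~\ref{mahler} applied to the quadratic $(2n-c_1)(2n-c_2)$ with all prime factors bounded. Your approach avoids the $k$-by-$k$ case computations and needs no explicit constants, but it replaces the cubic Thue step with Mahler's theorem, which is ineffective; the paper specifically notes that its Case~2 exceptions ``can also be made explicit,'' so the Thue route is at least in principle effective, whereas yours is not. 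Since the overall Theorem~\ref{themainthm} is already ineffective on account of Lemma~\ref{3l5}, this is a philosophical rather than practical loss for the final result, but it is a real difference in what the lemma itself delivers.
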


\begin{proof}
The first part of our proof (Case 1 below) will only involve finitely many exceptional $n$ which are $\le 13$.  
A direct check verifies the inequality in Lemma~\ref{3l4} for $14 \le n \le 42$,
and henceforth we only consider $n \ge 43$ throughout the proof of Lemma~\ref{3l4}.  For the second part of the proof (Case 2 below), the
exceptional $n$ can also be made explicit, but we do not do so and allow for these exceptional $n$ to be
somewhat larger.

Following the proof of Lemma~\ref{3l3}, Case 2, we set 
\[
T = \{ 2n+1, 2n-1, \dots , 2n-k+2 \},
\]
and apply Lemma~\ref{T}.  Thus, for each odd prime $p \le k+2$,
we remove from $T$ a number $a_{p}$ divisible by $p^e$, where $e = e(p)$ is as large as possible,
to obtain a subset $S$ of $T$ satisfying \eqref{sizeSboundr}, where 
\[
r = (k+1)/2 - \pi(k+2) + 1.
\]
A direct computation shows that $r = 1$ for $k \in \{ 7, 9, 11 \}$.  
Thus, $|S| \ge 1$. 
Fix a number $a = a(n,k)$ in $S$.  

\vskip 8pt\noindent
\textit{Case 1:  There exists a prime $p \ge k+4$ such that $p$ divides at least 1 of the numbers in $T - \{ a \} $.}

Recall $v$ is the product in Lemma~\ref{3l4}.  
If $k=7$, then $T= \{2n+1, 2n-1, 2n-3, 2n-5 \}$.  So $a \ge 2n-5$ and note
that $7 \nmid a$ and $5 \nmid a$ since the numbers $a_5$ and $a_7$
were removed from $T$ to form $S$.  Also, $3^2 \nmid a$.  Of
the numbers in $T - \{ a \}$, one of these is divisible by $p \ge 11$.  Thus,
\[
 v \ge \frac{p(2n-5)}{3} \ge \frac{11(2n-5)}{3}.
\]
A direct check shows that $11(2n-5)/3 > 2n+1$ if and only if $n > 29/8$.
Since $n \ge 43$, this last inequality holds, and $v > 2n+1$.

If $k = 9$, then $T= \{2n+1, 2n-1, 2n-3, 2n-5, 2n-7  \}$.  So $a \ge 2n-7$.
Note that $11 \nmid a$, $7 \nmid a$, and $5 \nmid a$.  Also $3^2 \nmid a$.
One of the numbers in $T - \{ a \} $ is divisible by $p \ge 13$.  Thus,
\[
v \ge \frac{p(2n-7)}{3} \ge \frac{13(2n-7)}{3}.
\]
As $13(2n-7)/3  > 2n+1$ if and only if $n > 47/10$, we deduce again that $v > 2n+1$.

If $k = 11$, then $T= \{2n+1, 2n-1, 2n-3, 2n-5, 2n-7, 2n-9  \}$.  So $a \ge 2n-9$.
Note that $13 \nmid a$, $11 \nmid a$, $7 \nmid a$, $5^2 \nmid a$, and $3^2 \nmid a$.
One of the numbers in $T - \{ a \} $ is divisible by $p \ge 17$.  Thus,
\[
v \ge \frac{p(2n-9)}{3 \times 5} \ge \frac{17(2n-9)}{15}.
\]
As $17(2n-9)/15  > 2n+1$ if and only if $n > 42$, 
we see again that $v > 2n+1$.

\vskip 8pt\noindent
\textit{Case 2:  There does not exist a prime $p \ge k+4$ such that $p$
divides at least one of the numbers in $T - \{ a \} $.}

To finish the proof of Lemma~\ref{3l4}, we prove that this case occurs for at most finitely many $n$. 
We make use of the following lemma which is a special case of a more
general theorem of Thue (see \cite{thue}).

\begin{lemma}\label{Thue}
Let $a$, $b$, and $c$ be fixed integers with $c \ne 0$.  Then there exist
only finitely many integer pairs $(x,y)$ for which 
$a x^3 + b y^3 = c.$
\end{lemma}
  
In this case, all the numbers in $T- \{ a \} $ are divisible only by primes $p \le k+2$. 
In particular, at least two of the numbers are powers of primes.  More precisely, when
$k=7$, one of $\{ 2n+1, 2n-1, 2n-3, 2n-5 \}$ is a power of 3 and another
a power of $p$ for some $p \in \{ 5, 7 \} $; when $k=9$, one
of $ \{ 2n+1, 2n-1, 2n-3, 2n-5, 2n-7 \} $ is a  power of $3$ and 
another a power of $p$ where $p \in \{ 5, 7, 11 \}$; and
when $k=11$, one of $ \{ 2n+1, 2n-1, 2n-3, 2n-5, 2n-7 , 2n-9 \} $ 
is a  power of $p_1$ and another a power of $p_2$ where $p_1 \ne p_2$ and
$p_1, p_2 \in \{ 3, 5, 7, 11, 13 \}$. 

Let $q$ be the greatest prime $ \le k+2 $, and 
let $P= \{ 3, 5, 7, \dots, q \}$ be the set of odd primes $\le q$.  
Let $2n-i$ and $2n-j$ where $i \ne j$ and
$i, j \in \{ -1, 1, 3, 5, 7, 9 \}$ denote two numbers
in $T - \{ a \} $ with each a power of a prime in $P$.  So $2n-i = p_1^u$ 
and $2n-j = p_2^v$ where $p_1, p_2 \in P$.  We consider the number of 
integer solutions $u$ and $v$ to 
\begin{equation}\label{3e2}
|p_1^u-p_2^v|=\ell \quad \text{where} \quad \ell \in \{ 2, 4, 6, 8, 10 \}.
\end{equation}
Note that by letting $u=3q_1 + r_1$ and $v=3q_2 + r_2$ where $q_1$ and 
$q_2$ are integers and $r_1, r_2 \in \{ 0, 1, 2 \} $, we can rewrite
the above equation as 
\begin{equation}\label{3e3}
|c_1x^3-c_2y^3|=\ell
\end{equation}
where $c_1$ and $c_2$ are integers and $x=p_1^{q_1}$ and $y=p_2^{q_2}$.
By Lemma~\ref{Thue}, there
are only finitely many integer solutions $x$ and $y$ to \eqref{3e3}, and thus 
finitely many integers $q_1$ and $q_2$. This implies that there are only
finitely many integers
$u$ and $v$ that satisfy \eqref{3e2}.  Therefore, there are only finitely many
integers $n$ where no number in $T - \{ a \} $  is divisible by a prime
$p \ge k+4$, completing what we set out to show for this case.
\end{proof}

Finally, we consider the case $k=5$.  In this case,
\[ 
v = \prod_{\substack{p^r \Vert ((2n+1)(2n-1)(2n-3)) \\  p \ge
 9 }} p^r.
 \]
We establish the following lemma, which will finish the proof of Lemma~\ref{3l2}.  
Furthermore, as noted after the statement of Lemma~\ref{3l2}, we will have completed the
proof of Theorem~\ref{themainthm}.

\begin{lemma}\label{3l5}
For $n$ a sufficiently large integer, we have
\[
\prod_{\substack{p^r \Vert ((2n+1)(2n-1)(2n-3)) \\  p \ge
 9 }} p^r>2n+ 1.
 \]
\end{lemma}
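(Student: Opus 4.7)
The plan is to assume, for contradiction, that $v \le 2n+1$ infinitely often and derive finiteness via Mahler's $S$-unit theorem. Equivalently, we assume the $\{3, 5, 7\}$-smooth part $u$ of $N = (2n-3)(2n-1)(2n+1)$ satisfies $u \ge (2n-1)(2n-3)$, and aim to show this forces $n$ to lie in a finite (ineffectively determined) set.

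Set $A = 2n-3$, $B = 2n-1$, $C = 2n+1$. Each $p \in \{3, 5, 7\}$ divides at most one of $A, B, C$ (exactly one for $p = 3$); let $T_p$ denote that element when it exists, so that $u = 3^{v_3(T_3)} \cdot 5^{v_5(T_5)} \cdot 7^{v_7(T_7)}$. First I would dispose of the case in which at most two of $T_3, T_5, T_7$ are distinct: then $u$ divides a product of at most two elements of $\{A, B, C\}$, giving $u \le (2n+1)(2n-1)$. Combined with $u \ge (2n-1)(2n-3)$ and the bound $(2n+1)/(2n-3) < 2$ for $n \ge 4$, the $\{3, 5, 7\}$-coprime cofactors of the involved elements are forced to be $1$, exhibiting two of $A, B, C$ as explicit positive $\{3, 5, 7\}$-units differing by $2$ or $4$. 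One thus obtains an $S$-unit equation $|s_1 - s_2| \in \{2, 4\}$, which has only finitely many solutions by Mahler's ineffective $S$-unit theorem~\cite{mahler}.

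In the remaining case, $T_3, T_5, T_7$ are all distinct, hence a permutation of $\{A, B, C\}$. After relabeling, write $A = 3^\alpha A'$, $B = 5^\beta B'$, $C = 7^\gamma C'$ with $A', B', C'$ coprime to $105$ (and pairwise coprime, since any prime $\ge 11$ divides at most one of three integers whose pairwise differences are at most $4$). The hypothesis $u \ge (2n-1)(2n-3)$ then translates to $A' B' C' \le 2n+1$. When one or more of $A', B', C'$ equals $1$, the corresponding element is a pure prime power in $\{3, 5, 7\}$, and one again arrives at an $S$-unit equation of the form $|p_1^{\alpha_1} - p_2^{\alpha_2}| \in \{2, 4\}$ with finitely many solutions by Mahler.

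The main obstacle is the sub-case in which all of $A', B', C'$ exceed $1$: here the constraints interact in a genuinely three-variable way and one needs a more delicate appeal to Mahler's theorem (applied to the binary form $F(X,Y) = X(X-2Y)(X+2Y)$ evaluated at $(x,y) = (2n-1,1)$, or via a careful combination of the three relations $B-A = 2$, $C-B = 2$, $C-A = 4$ with the bound $A'B'C' \le 2n+1$) to conclude that only finitely many $n$ can arise. This ineffective application is precisely what accounts for the finitely many undetermined exceptional pairs $(a_n, n)$ in Theorem~\ref{themainthm} alluded to in the introduction.
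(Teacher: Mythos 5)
Your approach differs from the paper's and has a genuine gap in its central case. The paper does not attempt a case analysis reducing to $S$-unit equations; instead it applies Lemma~\ref{mahler} (for fixed nonzero $a$, fixed $N$, and $n$ large, the largest divisor of $n(n+a)$ coprime to $N$ is $\ge n^{1-\epsilon}$) twice with $\epsilon = 1/4$ and $N = 3 \cdot 5 \cdot 7$: once to the pair $(2n+1)(2n-1)$ and once to $(2n-1)(2n-3)$. Writing $A, B, C$ for the $105$-coprime parts of $2n+1$, $2n-1$, $2n-3$, the first application gives $AB \ge (2n+1)^{3/4}$, so $A \ge (2n+1)^{3/8}$ or $B \ge (2n+1)^{3/8}$; assuming the former (the latter is symmetric), the second application gives $BC \ge (2n-1)^{3/4}$, and hence $ABC \ge (2n+1)^{3/8}(2n-1)^{3/4} > (2n-1)^{9/8} > 2n+1$. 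This quantitative pairing argument sidesteps any delicate interaction among the three factors and needs no case split.

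Your proposal, by contrast, leaves unresolved precisely the sub-case you flag as the ``main obstacle,'' where all three coprime cofactors $A', B', C'$ exceed $1$; the appeal to the cubic binary form $X(X-2Y)(X+2Y)$, or to a ``careful combination'' of the three difference relations with the bound $A'B'C' \le 2n+1$, is a gesture rather than an argument, and the paper's Lemma~\ref{mahler} speaks only to quadratic expressions $n(n+a)$, not cubics. Moreover, the intermediate case where exactly one of $A', B', C'$ equals $1$ also fails to produce an equation of the form $|p_1^{\alpha_1} - p_2^{\alpha_2}| \in \{2,4\}$: if only one of the three numbers is a pure prime power, the other two still carry nontrivial cofactors coprime to $105$, and the remaining constraints (divisibilities plus the inequality on $A'B'C'$) do not reduce to an $S$-unit equation. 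So at least two branches of your sketch are incomplete and the argument does not close. Your Case~1, where the $\{3,5,7\}$-smooth part $u$ divides a product of two of the three odd numbers and the bound $u \ge (2n-1)(2n-3)$ forces that product to equal $u$, is correct; and you are right that the ineffectivity in Mahler's work is what makes the exceptional pairs in Theorem~\ref{themainthm} undetermined. But the overall route you propose needs substantially more to be a proof.
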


\begin{proof}
We use the following consequence of a theorem of Mahler \cite{mahler} which is demonstrated in \cite{fi}.

\begin{lemma}\label{mahler}
Let $a$ be a fixed non-zero integer, and let $N$ be a fixed positive integer.
Let $ \epsilon > 0$.  If $n$ is sufficiently large (depending on $a$, $N$,
and $\epsilon$), then the largest divisor of $n(n+a)$ which is relatively
prime to $N$ is $ \ge n^{1 - \epsilon}.$
\end{lemma}

Before proceeding, we note that Lemma~\ref{mahler} is ineffective, 
which in turn makes us unable to determine the finitely many exceptional pairs $(a_n,n)$ 
mentioned in Theorem~\ref{themainthm}.

Since we are only interested in the primes $p \ge 11$ that divide the product
$(2n+1)(2n-1)(2n-3)$, we take $N=3 \times 5 \times 7$ in Lemma \ref{mahler}.
Let
\[
A=\prod_{\substack{p^r \Vert (2n+1) \\  p \nmid
 N }} p^r, \ \ \ B=\prod_{\substack{p^r \Vert (2n-1) \\  p \nmid
 N }} p^r, \ \text{\ and \ } C=\prod_{\substack{p^r \Vert (2n-3) \\  
p \nmid N }} p^r.
\]
Note that
\[
\prod_{\substack{p^r \Vert ((2n+1)(2n-1)(2n-3)) \\  p \ge
 11 }} p^r = \prod_{\substack{p^r \Vert ((2n+1)(2n-1)(2n-3)) \\  p \nmid
 N }} p^r=ABC.
 \]
First consider the product $(2n+1)(2n-1)$.  Take $ \epsilon = 1/4$.  
Then by Lemma~\ref{mahler}, the largest divisor of $(2n+1)(2n-1)$ 
that is relatively prime to $N$ is $\ge (2n+1)^{3/4}$ for $n$ sufficiently large.  
Thus, $AB \ge (2n+1)^{3/4}$.  
We deduce that either $A \ge (2n+1)^{3/8}$ or $B \ge (2n+1)^{3/8}$. 

We suppose $A \ge (2n+1)^{3/8}$ (a similar argument can be done in the case that
$B \ge (2n+1)^{3/8}$).  Next, we consider the product $(2n-1)(2n-3)$.  
Again, take $ \epsilon = 1/4$.  
By Lemma \ref{mahler},
the largest divisor of $(2n-1)(2n-3)$ which is relatively prime to $N$ is
$\ge (2n-1)^{3/4}$ for $n$ sufficiently large.  
Thus, $BC \ge (2n-1)^{3/4}$, and we deduce
\begin{align*}
\prod_{\substack{p^r \Vert ((2n+1)(2n-1)(2n-3)) \\  p \ge 11 }} p^r &= ABC \ge (2n+1)^{3/8} (2n-1)^{3/4} \\
&> (2n-1)^{9/8} > (2n+1).
 \end{align*}
Therefore, for $n$ sufficiently large, we see that the inequality in Lemma~\ref{3l5} (and hence in Lemma~\ref{3l2}) holds.
\end{proof}

\section{Establishing sharpness of the results}

We have shown that if $0 < |a_n| < M$ (where $M = \min\{ k',k''\}$), then $f(x)$
is irreducible.  We show that this upper bound on $|a_n|$ is sharp.  More precisely, 
we show that for $n>2$, when $a_n = M$ and $a_0 =1$, there exist integers 
$a_{n-1}, a_{n-2}, \dots, a_1$
such that $f(x)$ is reducible. 

Either $M=k' \le k''$ or $M=k''<k'$.  We show
the following results.

\begin{itemize}[leftmargin=15pt]
\item If $a_n = k'$ and $a_0 = 1$, then there are integers $a_{n-1}, a_{n-2}, \dots, a_1$ for which $f(x)$ has the irreducible 
quadratic factor $x^2 - 3$ (or similarly $x^{2}+3$).

\item If $a_n = k'' < k'$ and $a_0=1$, then there are integers $a_{n-1}, a_{n-2}, \dots, a_1$ for which $f(x)$ has the
irreducible quartic factor $x^4 - 5x^2 -15$.
\end{itemize}

We quickly address the situation where $n \le 2$ before restricting to $n > 2$.  
When $n=2$, the polynomial $f(x)$ is a quartic polynomial.  Also, $k' = 5$ and $k'' = 1$.  
From the comments after the statement of Lemma~\ref{3l2}, we deduce that
$f(x)$ cannot have a linear or quadratic factor
(and, thus, $f(x)$ is irreducible) whenever $n = 2$, $0 < |a_n| < 5$ and $|a_0|=1$.
Furthermore, when $n = 2$, $|a_n|=5$ and $|a_0|=1$, by the lemma below, there exists 
an integer $a_1$ such that $x^2 - 3$ (or $x^2+3$) is a factor of $f(x)$.
When $n=1$, we see that $k' = k'' = 1$ and $f(x) = a_{1} x^{2}/3 + a_{0}$, which is a quadratic polynomial,
and one can check that it is irreducible for $0 < |a_1| <3$ and $|a_0|=1$.  
For $|a_1|=3$ and $|a_0|=1$, with $a_{1}$ and $a_{0}$ of opposite signs, the quadratic $f(x)$ has 
the linear factors of $x+1$ and $x-1$.

For our goals above, we first show that, for $n \ge 2$, there exist integers $a_{n-1}, a_{n-2}, \ldots, a_1$ so that
we can make $x^2 - 3$ or $x^2 + 3$ (whichever we choose) a factor of $f(x)$
when $a_n = k'$ and $a_0 = 1$.

\begin{lemma}\label{3l9}
Let $n$ be an integer $\ge 2$, and
let $k'$ be the integer such that $2n+1 = k'3^u$ where $u$ is an
integer $\ge 0$ and $(k',3)=1$.  If 
$a_n=k'$ and $a_0=1$, then there exist integers 
$a_{n-1}, a_{n-2}, \dots, a_1$ such that
$x^2 - 3$ (or $x^2 + 3$) is a factor of $f(x)$.
\end{lemma}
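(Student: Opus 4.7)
The plan is to exploit the fact that $f(x)$ is a polynomial in $x^2$ and reduce the existence of the prescribed quadratic factor to the solvability of a single linear Diophantine equation in $a_1,\ldots,a_{n-1}$. Writing $f(x)=g(x^2)$ with $g(y)=\sum_{j=0}^{n}a_j y^j/u_{2j+2}$, the condition that $x^2-3$ (respectively $x^2+3$) divides $f(x)$ amounts to $g(3)=0$ (respectively $g(-3)=0$). I will carry out the $x^2-3$ case; the $x^2+3$ case is obtained by replacing each coefficient $D_j$ below by $(-1)^j D_j$, which affects neither the relevant gcd nor the divisibility conclusions.

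Multiplying $g(3)=0$ through by $u_{2n+2}$ and setting
\[
D_j \;:=\; 3^j\,\frac{u_{2n+2}}{u_{2j+2}} \;=\; 3^j(2j+3)(2j+5)\cdots(2n+1),\qquad 0\le j\le n,
\]
the condition becomes $\sum_{j=0}^{n}a_j D_j=0$. Inserting the prescribed values $a_0=1$ and $a_n=k'$, and using $D_0=u_{2n+2}$ and $D_n=3^n$, the task reduces to finding integers $a_1,\ldots,a_{n-1}$ satisfying
\[
\sum_{j=1}^{n-1}a_j D_j \;=\; -u_{2n+2}-k'\cdot 3^n.
\]
By B\'ezout's lemma, this linear Diophantine equation is solvable in integers if and only if $d:=\gcd(D_1,\ldots,D_{n-1})$ divides $u_{2n+2}+k'\cdot 3^n$. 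I will verify the stronger statements $d\mid u_{2n+2}$ and $d\mid k'\cdot 3^n$ separately, so that the divisibility of the sum is automatic.

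The first divisibility is immediate: a direct computation gives $D_1=3\cdot 5\cdots(2n+1)=u_{2n+2}$, so $d\mid D_1=u_{2n+2}$. For $d\mid k'\cdot 3^n$, I compare $p$-adic valuations prime by prime. All $D_j$ are products of odd integers, so $\nu_2(d)=0=\nu_2(k')$. For an odd prime $p\ne 3$, the identity $D_{n-1}=3^{n-1}(2n+1)$ yields $\nu_p(d)\le\nu_p(2n+1)=\nu_p(k')$, using that $k'=(2n+1)/3^u$ has the same $p$-adic valuation as $2n+1$ when $p\ne 3$. The main obstacle is the case $p=3$, where $\nu_3(k'\cdot 3^n)=n$: here I bound $\nu_3(d)\le\nu_3(D_1)=\nu_3(u_{2n+2})\le\nu_3((2n+1)!)$, and then apply Legendre's formula to obtain $\nu_3((2n+1)!)=(2n+1-s_3(2n+1))/2\le n$, where $s_3(m)$ denotes the sum of the base-$3$ digits of $m$ and $s_3(2n+1)\ge 1$. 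Combining all three cases gives $d\mid u_{2n+2}+k'\cdot 3^n$, so the Diophantine equation has an integer solution, producing the required coefficients $a_1,\ldots,a_{n-1}$.
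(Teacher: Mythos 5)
Your proof is correct but follows a genuinely different route from the paper's. The paper fixes $a_{n-2}=\cdots=a_2=0$ from the start, leaving only $a_{n-1}$ and $a_1$ as free parameters; it then computes $c=\nu_3(u_{2n+2})$, shows $c\le n-1$ (via $\nu_3((2n+1)!)-\nu_3(n!)$, which requires $n\ge 3$ and forces the $n=2$ case to be handled by an explicit example), and constructs the coefficients via B\'ezout applied to the specific coprime pair $3^{\,n+u-1-c}$ and $m$, where $u_{2n+2}=k'3^{c}m$. You instead keep all of $a_1,\ldots,a_{n-1}$ as free variables and invoke the general solvability criterion for a single linear Diophantine equation in many unknowns, namely that $\sum_{j=1}^{n-1} a_j D_j = -u_{2n+2}-k'3^n$ is solvable precisely when $d=\gcd(D_1,\ldots,D_{n-1})$ divides the right side; you then check this prime by prime, using $D_1=u_{2n+2}$, $D_{n-1}=3^{n-1}(2n+1)$, and Legendre's formula to get the (weaker but sufficient) bound $\nu_3(u_{2n+2})\le n$. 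Your approach is more uniform (it handles $n=2$ without a separate case) and cleaner in the sense that the only arithmetic input is the $3$-adic estimate; the paper's approach is more explicit, exhibiting a reducible $f$ in which all but two of the intermediate coefficients vanish. Both correctly dispatch the $x^2+3$ variant by the sign change $D_j\mapsto(-1)^jD_j$, which leaves the gcd and the divisibility unchanged.
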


\begin{proof}
Examples showing that the lemma holds for $n=2$ are given by
\begin{gather*}
u_6 f(x) = 5x^4 -20x^2 + 15 = 5 (x+1)(x-1)(x^2-3), \\
u_6 f(x) = 5x^4 + 20x^2 + 15 = 5 (x^{2}+1)(x^2+3).
\end{gather*}
We now consider $n \ge 3$.
Let $a_n = k'$, $a_0=1$, and $a_{n-2}=a_{n-3} = \cdots = a_2 = 0.$  Then
\[
u_{2n+2}f(x)=k'x^{2n}+a_{n-1}c_{n-1}x^{2n-2}+a_1c_1x^2+c_0
\]
where $c_{n-1}=2n+1=k' 3^u$, $c_0 = u_{2n+2}= 3 \times 5 \times
\cdots \times (2n-1) \times k'3^u$, and 
$c_1= u_{2n+2}/3=5 \times \cdots \times (2n-1) \times k'3^u$.  
Let $c= \nu_3(u_{2n+2})$.  Since $n \ge 3$, we have $c > u = \nu_3(c_{n-1})$.  Then
$c-1= \nu_3(u_{2n+2}/3)$.  Let $m$ be the integer for which
$c_0=u_{2n+2}=k'3^cm$ where $(3,m)=1$.  
Note that $k'$ and $m$ may not be coprime.
Let $t = x^2$ and consider the polynomial $F(t)$ where
\[
F(t) = k' t^n+a_{n-1}c_{n-1}t^{n-1}+a_1c_1t+c_0.
\]
We will obtain $t-3$ as a factor of $F(t)$, and thus $x^2 -3$ as a factor
of $f(x)$, by choosing $a_{n-1}$ and $a_1$ so that $F(3)=0$.

First, we show $c=\nu_3(u_{2n+2}) \le n-1$.
Since $n \ge 3$, we have $\nu_{3}(n!) \ge 1$.  Hence, we obtain
\begin{align*}
c &= \nu_3\big((2n+1)!\big)-\nu_3\big(2 \times 4 \times \cdots \times (2n)\big) \\
&= \nu_3\big((2n+1)!\big)-\nu_3\big(n!\big) \\
&= \sum_{j=1}^{\infty} \bigg\lfloor \dfrac{2n+1}{3^{j}} \bigg\rfloor - \nu_3\big(n!\big) \\
&< \sum_{j=1}^{\infty} \dfrac{2n+1}{3^{j}} - \nu_3\big(n!\big) \\
&= \dfrac{2n+1}{2} - 1 = n - (1/2).
\end{align*}
Since $c \in \mathbb Z$, we deduce $c \le n-1$.  

We are now ready to show that we can choose $a_{n-1}$ and $a_1$ so that $F(3)=0$.
Setting $m'= 3^{n-c} +m \in \mathbb Z$ and using our notation above, we see that
\begin{align*}
F(3) &= k' 3^n + a_{n-1}c_{n-1}3^{n-1} + a_1 c_1 3 + c_0 \\
&= k'3^n + a_{n-1} k' 3^{n+u-1} + a_1 k' 3^c m + k' 3^c m \\
&= k' 3^{c}(3^{n-c} + a_{n-1}3^{n+u-1-c} + a_1m+m) \\
&= k' 3^{c}(m' + a_{n-1}3^{n+u-1-c} + a_1m).
\end{align*}
Since $3^{n+u-1-c}$ and 
$m$ are relatively prime integers, there
exist integers $s$ and $t$ such that 
\[
3^{n+u-1-c}s+mt=1.
\]  
By taking $a_{n-1} = -sm'$ and $a_{1} = -tm'$, we deduce that $F(3) = 0$, as we wanted.

By a very similar analysis, one can show $x^2 + 3 $ can be a factor of $f(x)$ when
$a_n = k'$ and $a_0 = 1$, concluding the proof of Lemma~\ref{3l9}.
\end{proof}

We are left with considering the case that $a_n = k'' < k'$ and $a_0=1$.  
We restrict to $n \ge 3$, and note that $k'' < k'$ implies then that $n \ge 12$. 
The definitions of $k'$ and $k''$ further give that if $k '' < k'$, 
then $5 \mid (2n+1)$.  Also, we see that
$3 \nmid (2n+1)$ since otherwise $k' \le (2n+1)/3 < 2n-1 \le k''$.
Hence, we can write $2n-1 = 3^{k} m$ and $2n+1 = 5^{\ell} m'$ where $k$ is a 
nonnegative integer and $n$, $\ell$, $m$, and $m'$ are positive integers with $n \ge 12$
and $\gcd(mm',15) = 1$.  Observe that $k'' = mm'$.  
With this notation, set $a_{n} = mm'$.  We show
that there exist integers $a_{n-1},a_{n-2}, \dots,a_{1}$ such that
the polynomial
\[
f(t)=a_{n} \dfrac{x^{2n}}{u_{2n+2}} + a_{n-1} \dfrac{x^{2n-2}}{u_{2n}} 
+ \cdots + a_{1} \dfrac{x^2}{3} + 1
\]
has the quartic factor $x^4 - 5 x^2 -15$.
Let $t = x^{2}$, and let
\[ 
F(t) = a_{n} \dfrac{t^{n}}{u_{2n+2}} + a_{n-1} \dfrac{t^{n-1}}{u_{2n}} 
+ \cdots + a_{1} \dfrac{t}{3} + 1. 
\]
Note that $F(x^2) = f(x)$.  Thus, it suffices to show that $F(t)$
is divisible by the quadratic $q(t) = t^{2} - 5 t - 15$. 
To do this, we multiply $F(t)$ by $u_{2n+2}$ and divide through by $a_{n} = m m'$ to obtain the polynomial
\begin{align*} 
t^{n} + 5^{\ell} \dfrac{a_{n-1}}{m} t^{n-1} &+ 5^{\ell} 3^{k} a_{n-2} t^{n-2} 
+ \cdots \\ &\quad + 5^{\ell-1} 3^{k-1} u_{2n-2} a_{2} t^{2} + 5^{\ell} 
3^{k-1} u_{2n-2} a_{1} t 
+ 5^{\ell} 3^{k} u_{2n-2}. 
\end{align*}
Let $r$, $s$, $y$ and $w$ be variables representing integers, and take
$a_{n-1} = m r$, $a_{n-2} = s$, $a_{n-3} = a_{n-4} = \cdots = a_{3} = 0$,
$a_{2} = -y$, and $a_{1} = w+y$.  The polynomial above becomes
\begin{align*} 
g(t) = t^{n} + 5^{\ell} r t^{n-1} &+ 5^{\ell} 3^{k} s t^{n-2} - 5^{\ell-1} 
3^{k-1} u_{2n-2} y t^{2}  \\
&\quad + 5^{\ell} 3^{k-1} u_{2n-2} (w+y) t + 5^{\ell} 3^{k} u_{2n-2}. 
\end{align*}
It suffices now to show that there exist integers $r$, $s$, $y$, and $w$ 
such that 
$g(t)$ is divisible by $q(t)$.  

For $j \ge 0$, define integers $b_{j}$ and $c_{j}$ by
\[ 
t^{j} \equiv b_{j}t+c_{j} \pmod{q(t)}. 
\]
Since
\begin{equation}\label{eqone} 
t^{j+1} \equiv 5 t^{j}+15 t^{j-1} \pmod{q(t)}, \quad \text{ for } j \ge 1,
\end{equation}
we deduce
\begin{equation}\label{eqtwo}
c_{j+1} = 5 c_{j}+15 c_{j-1} 
\qquad \text{and} \qquad b_{j+1} = 5 b_{j}+15 b_{j-1}, \quad \text{ for } j \ge 1.
\end{equation}
Letting
\[ 
A = \begin{pmatrix}
0 & 1 \\
15 & 5 
\end{pmatrix},
\]
we obtain from \eqref{eqtwo} and an induction argument that
\[ 
A^{j} = \left( \begin{array}{cc}
c_{j} & b_{j} \\
c_{j+1} & b_{j+1} 
\end{array} \right),  \quad \text{ for } j \ge 0.
\]
Next, we obtain some results for the values of $\nu_{3}(c_{j})$, 
$\nu_{3}(b_{j})$, 
$\nu_{5}(c_{j})$, and
$\nu_{5}(b_{j})$.  
An induction argument gives that
\[ 
A^{2j} \equiv \begin{pmatrix}
6 & 5 \\
3 & 4 
\end{pmatrix} {\hskip -6pt}\pmod{9}
\quad \text{ and } \quad
A^{2j+1} \equiv \begin{pmatrix}
3 & 4 \\
6 & 5 
\end{pmatrix} {\hskip -6pt}\pmod{9},  \quad \text{ for } j \ge 1.
\]
Hence, we see that
\begin{equation}\label{eqthree}
\nu_{3}(c_{j}) = 1 \quad \text{and} \quad \nu_{3}(b_{j}) = 0,  \quad \text{ for } j \ge 2.
\end{equation}

Next, we claim that 
\begin{equation}\label{eqfour}
\nu_{5}(c_{j}) \ge \dfrac{j}{2} \quad \text{and} \quad 
\nu_{5}(b_{j}) \ge \dfrac{j-1}{2},  \quad \text{ for } j \ge 2.
\end{equation}
For $j=2$ and $j=3$, one checks directly that \eqref{eqfour} holds.
From \eqref{eqtwo}, we deduce
\begin{align*}
\nu_{5}(c_{j+1}) &\ge \min\{\nu_{5}(c_{j}), \nu_{5}(c_{j-1})\} + 1 \\
\intertext{and} 
\nu_{5}(b_{j+1}) &\ge \min\{\nu_{5}(b_{j}), \nu_{5}(b_{j-1})\} + 1. 
\end{align*}
By induction, we obtain that \eqref{eqfour} holds.

Using that $\det (A^{j}) = \det(A)^{j}$, we obtain
\begin{equation}\label{eqfive}
c_{j}b_{j+1} - c_{j+1}b_{j} = \pm 15^{j}. 
\end{equation}
Given \eqref{eqfour}, we deduce that, for $j \ge 2$,
at least one of $\nu_{5}(c_{j}) = j/2$ and $\nu_{5}(c_{j+1}) = (j+1)/2$
holds.  Only one of $j/2$ and $(j+1)/2$ can be an integer.  It follows that
\begin{equation}\label{eqsix}
\nu_{5}(c_{j}) = \dfrac{j}{2} \quad \text{ for } j \ge 2 \text{ even}.
\end{equation}
Note that parity considerations also imply from \eqref{eqfour} that
$\nu_{5}(c_{j}) \ge (j+1)/2$ if $j$ is odd and that
$\nu_{5}(b_{j}) \ge j/2$ if $j$ is even. 

Recall that $t^{2} \equiv 5 t+15 \pmod{q(t)}$.  We obtain from the
definitions $g(t)$, $b_{j}$ and $c_{j}$ that
\begin{align*} 
g(t) \equiv \big( b_{n} + 5^{\ell} r b_{n-1} &+ 5^{\ell} 3^{k} s b_{n-2} 
+ 5^{\ell} 3^{k-1} w u_{2n-2} \big) t \\
&\quad + 
c_{n} + 5^{\ell}r c_{n-1} + 5^{\ell} 3^{k} s c_{n-2} + 5^{\ell} 3^{k} 
u_{2n-2} (1-y)
\end{align*}
modulo $q(t)$. 
We will show that for some integers $r$, $s$, $y$, and $w$, we have
\[
b_{n} + 5^{\ell} r b_{n-1} + 5^{\ell} 3^{k} s b_{n-2} + 5^{\ell} 3^{k-1} w 
u_{2n-2}=0.
\]
and
\[
c_{n} + 5^{\ell}r c_{n-1} + 5^{\ell} 3^{k} s c_{n-2} + 5^{\ell} 3^{k} 
u_{2n-2} (1-y)=0
\]
It will then follow that $g(t) \equiv 0 \pmod{q(t)}$.

We first show that there are integers $r$, $s$, and $y$ such that
\[
5^{\ell}r c_{n-1} + 5^{\ell} 3^{k} s c_{n-2}=-(c_n+5^{\ell} 3^{k} u_{2n-2} 
(1-y)).
\]
Observe that $n \ge 12$ and \eqref{eqtwo} imply $c_{n-1} > 0$ and $c_{n-2} > 0$.  
Since in general, the Diophantine equation $ax+by = c$ for fixed positive integers $a$ and $b$
and for an ineteger $c$ has solutions in integers $x$ and $y$ if and only if $\gcd(a,b) \mid c$,
the above equation in $r$, $s$, and $y$ will have integer solutions in $r$ and $s$ if we can choose $y$ so that
\begin{equation}\label{soughtgcd}
\gcd( 5^{\ell}c_{n-1}, 5^{\ell}3^{k}c_{n-2}) \mid (c_n+5^{\ell} 
3^{k} u_{2n-2} (1-y)).
\end{equation}
Since $2n+1 = 5^{\ell} m'$ and
\[ 
\nu_{5}((2n+1)!) < \dfrac{2n+1}{5} + \dfrac{2n+1}{5^{2}} + 
\dfrac{2n+1}{5^{3}} + \cdots
= \dfrac{2n+1}{4}, 
\] 
we obtain 
\begin{equation}\label{nufiveest}
\nu_{5}(5^{\ell} 3^{k} u_{2n-2}) \le n/2.
\end{equation}
Also, \eqref{eqfour}
implies $\nu_{5}(c_{n}) \ge n/2$.  It follows that, 
by choosing $1-y$ to satisfy a congruence modulo a sufficiently large power of $5$, 
there is an integer $y$ such that 
\[ 
\nu_{5}\big( c_{n} + 5^{\ell} 3^{k} u_{2n-2} (1-y) \big) \ge  
\ell + \min\{ \nu_{5}(c_{n-1}), \nu_{5}(c_{n-2}) \}. 
\]
We may also find such a $y$ with $1-y$ divisible by $3$.  
Fix such a $y$.  From \eqref{eqthree}, we obtain
\begin{align*}
\nu_{3}\big( c_{n} + 5^{\ell} 3^{k} u_{2n-2} (1-y) \big) &\ge 1 = \nu_{3}(5^{\ell}c_{n-1}) \\
&\ge \min\{ \nu_{3}(5^{\ell}c_{n-1}), \nu_{3}(5^{\ell}3^{k}c_{n-2}) \}. 
\end{align*}
Note that \eqref{eqfive} implies that $3$ and $5$ are the only prime 
factors possibly in common with $c_{n-1}$ and $c_{n-2}$.  
We deduce that \eqref{soughtgcd} holds, and therefore
there exist integers $r_{0}$ and $s_{0}$ such that 
\begin{equation}\label{eqseven} 
c_{n} + 5^{\ell}r_{0} c_{n-1} + 5^{\ell} 3^{k} s_{0} c_{n-2} 
+ 5^{\ell} 3^{k} u_{2n-2} (1-y) = 0.
\end{equation}

We fix $r_{0}$ and $s_{0}$ as above and observe that, 
for every integer $t$, we have
\[ 
c_{n} + 5^{\ell} c_{n-1} \big( r_{0} + 3^{k}c_{n-2} t \big) 
+ 5^{\ell} 3^{k} c_{n-2} \big( s_{0} - c_{n-1} t \big) 
+ 5^{\ell} 3^{k} u_{2n-2} (1-y) = 0. 
\]
We set 
\[ 
r = r_{0} + 3^{k}c_{n-2} t 
\qquad \text{and} \qquad
s = s_{0} - c_{n-1} t 
\]
and seek $t$ and $w$ so that
\[ 
b_{n} + 5^{\ell} r b_{n-1} + 5^{\ell} 3^{k} s b_{n-2} 
+ 5^{\ell} 3^{k-1} w u_{2n-2} = 0. 
\]
In other words, we want
\begin{align*}
5^{\ell} 3^{k-1} w u_{2n-2} &+ 5^{\ell} 3^{k}
\big( c_{n-2}b_{n-1} - c_{n-1}b_{n-2} \big) t \\ 
&\qquad + b_{n} + 5^{\ell}r_{0} b_{n-1} + 5^{\ell} 3^{k} s_{0} b_{n-2} = 0. 
\end{align*}
By \eqref{eqtwo}, we can rewrite this equation as
\begin{equation}\label{eqeight}
\begin{aligned}
5^{\ell} 3^{k-1} w u_{2n-2} &+ 5^{\ell} 3^{k}
\big( c_{n-2}b_{n-1} - c_{n-1}b_{n-2} \big) t  \\  
&\quad + (5^{\ell}r_{0}+5) b_{n-1} + (5^{\ell} 3^{k} s_{0} +15) b_{n-2} = 0. 
\end{aligned}
\end{equation}
From \eqref{eqtwo} and \eqref{eqseven}, we obtain
\begin{equation}\label{eqnine}
(5^{\ell}r_{0}+5) c_{n-1} + (5^{\ell} 3^{k} s_{0} +15) c_{n-2} 
+ 5^{\ell} 3^{k} u_{2n-2} (1-y) = 0. 
\end{equation}
From $n \ge 12$ and \eqref{eqtwo}, we see that each of $b_{n-2}$, $b_{n-1}$, 
$c_{n-2}$ and $c_{n-1}$ is nonzero.  
Multiplying both sides of \eqref{eqeight} by $c_{n-1}$ and both sides
of \eqref{eqnine} by $-b_{n-1}$ and then adding, we obtain
\begin{equation}\label{eqten}
\begin{aligned}
c_{n-1}5^{\ell} 3^{k-1} u_{2n-2} w &+ c_{n-1} 5^{\ell} 3^{k}
\big( c_{n-2}b_{n-1} - c_{n-1}b_{n-2} \big) t \\
&- (5^{\ell} 3^{k} s_{0} +15) \big( c_{n-2}b_{n-1} - c_{n-1}b_{n-2} 
\big)  \\
&- 5^{\ell} 3^{k} u_{2n-2} (1-y) b_{n-1} = 0. 
\end{aligned}
\end{equation}
Multiplying both sides of \eqref{eqeight} by $c_{n-2}$ and both sides
of \eqref{eqnine} by $-b_{n-2}$ and then adding, we obtain
\begin{equation}\label{eqeleven}
\begin{aligned}
c_{n-2}5^{\ell} 3^{k-1} u_{2n-2} w &+ c_{n-2} 5^{\ell} 3^{k}
\big( c_{n-2}b_{n-1} - c_{n-1}b_{n-2} \big) t \\
&+ (5^{\ell}r_{0}+5) \big( c_{n-2}b_{n-1} - c_{n-1}b_{n-2} \big) \\
&- 5^{\ell} 3^{k} u_{2n-2} (1-y) b_{n-2} = 0. 
\end{aligned}
\end{equation}
Observe that \eqref{eqnine} implies that if either \eqref{eqten} or 
\eqref{eqeleven} holds, 
then so does \eqref{eqeight}.
Recall \eqref{eqsix} and the comment after it.  
We work with \eqref{eqten} if $n$ is odd and make use of
$\nu_{5}(c_{n-1}) = (n-1)/2$, $\nu_{5}(c_{n-2}) \ge (n-1)/2$, and
$\nu_{5}(b_{n-1}) \ge (n-1)/2$.  For $n$ even, we work with
\eqref{eqeleven} and make use of
$\nu_{5}(c_{n-1}) \ge n/2$, $\nu_{5}(c_{n-2}) = (n-2)/2$, and
$\nu_{5}(b_{n-2}) \ge (n-2)/2$.    
We give the details of the argument in the case that $n$ is odd, and
simply note that a similar argument works in the case $n$ is even.

Fix $n \ge 12$ odd. 
Let 
\begin{gather*} 
c = c_{n-1}5^{\ell} 3^{k-1} u_{2n-2}, \quad 
c' = c_{n-1} 5^{\ell} 3^{k} \big( c_{n-2}b_{n-1} - c_{n-1}b_{n-2} \big), \\
c'' = (5^{\ell} 3^{k} s_{0} +15) \big( c_{n-2}b_{n-1} - c_{n-1}b_{n-2} \big), \\
\intertext{and}
c''' = 5^{\ell} 3^{k} u_{2n-2} (1-y) b_{n-1}.
\end{gather*}
From \eqref{nufiveest}, we deduce 
\[ 
\nu_{5}(c) \le \dfrac{n}{2} + \dfrac{n-1}{2} = n-\dfrac{1}{2}
\quad \implies \quad \nu_{5}(c) \le n-1. 
\]
Observe that
\[ 
\nu_{3}((2n-1)!) < \dfrac{2n-1}{3} + \dfrac{2n-1}{3^{2}} + 
\dfrac{2n-1}{3^{3}} + \cdots 
= \dfrac{2n-1}{2} = n - \dfrac{1}{2}. 
\]
Since 
$2n-1 = 3^{k} m$, we see that
\begin{align*}
\nu_{3}(3^{k-1} u_{2n-2}) &= \nu_{3}(u_{2n})-1 \\
&= \nu_{3}((2n-1)!) - \nu_{3}(2 \times 4 \times \cdots \times (2n-2)) -1.
\end{align*}
Since $n \ge 12$, we have $\nu_{3}(2 \times 4 \times \cdots \times (2n-2)) \ge 
\nu_{3}(2 \times 4 \times \cdots \times 22) = 4$.  Thus,    
$\nu_{3}(3^{k-1} u_{2n-2}) \le \nu_{3}((2n-1)!) - 5$. 
From \eqref{eqthree}, we see that 
\[
\nu_{3}(c) \le \nu_{3}(c_{n-1}) + \nu_{3}(3^{k-1} u_{2n-2})
\le \nu_{3}((2n-1)!)-4 < n -4.
\]
Since $\nu_{3}(c)$ is an integer, we obtain
\[ \nu_{3}(c) \le n-5. \]
Since $\ell \ge 1$, we have $5$ divides $5^{\ell} 3^{k} s_{0} +15$.  We obtain from \eqref{eqfive}
 that
\[ 
\nu_{3}(c'') \ge n-2 \qquad \text{and} \qquad \nu_{5}(c'') \ge n-1.
\]
Observe that \eqref{eqthree} implies $\nu_{3}(c''') \ge \nu_{3}(c)$.  Since 
$n$ is odd, we obtain from \eqref{eqsix} and the comment after it that
\[ 
\nu_{5}(b_{n-1}) \ge \dfrac{n-1}{2} = \nu_{5}(c_{n-1}). 
\]
Hence, $\nu_{5}(c''') \ge \nu_{5}(c)$.  Combining the above, we deduce
\[ \nu_{3}(c'' + c''') \ge \nu_{3}(c) \qquad \text{and} \qquad
 \nu_{5}(c'' + c''') \ge \nu_{5}(c). \]

We claim that $\gcd(c,c')$ divides $c'' + c'''$.  Let $p$ be a prime and 
$u$ a positive integer
for which $p^{u}\Vert \gcd(c,c')$.  The above analysis shows that if $p = 3$ or 
$p = 5$, then
$p^{u} \mid (c'' + c''')$.  Now, consider the case that $p$ is not $3$ or $5$.  
From \eqref{eqfive}
and the definition of $c'$, we obtain that $p^{u} \mid c_{n-1}$.  
From \eqref{eqnine}, we see that $p^{u}$ must also divide
\[ 
\big( (5^{\ell}3^{k}s_{0}+15) c_{n-2} + 5^{\ell}3^{k}u_{2n-2} (1-y) \big) 
b_{n-1}
- (5^{\ell}3^{k}s_{0}+15) b_{n-2} c_{n-1},
\] 
which is the same as $c'' + c'''$.  
Hence, $\gcd(c,c')$ divides $c'' + c'''$.

It now follows that there exist integers $w$ and $t$ for which 
$c w + c' t = c'' + c'''$.  This establishes the existence of integers $w$ 
and $t$
as in \eqref{eqeight} and, hence, the existence of integers $r$, $s$, 
$y$, and $w$ 
for which $g(x)$ is divisible by $x^{2}-5x-15$.

\vskip 5pt
\centerline{\hrulefill\phantom{have a nice day}}
\vskip 5pt\noindent
\parbox[t]{13.2cm}{
Department of Mathematics and Computer Science, Georgia College and State University, Milledgeville, GA  31061, U.S.A.;
Email: martha.allen{@}gcsu.edu.
}

\vskip 10pt\noindent
\parbox[t]{13.2cm}{
Mathematics Department, University of South Carolina, Columbia, SC  29208, U.S.A.;
Email: filaseta{@}math.sc.edu; 
http://www.math.sc.edu/{\,\~{}}filaseta/.
}


\begin{thebibliography}{10}

\bibitem{ma} M.~Allen, 
Generalizations of the Irreducibility Theorems of I.~Schur, 
Dissertation from the University of South Carolina, 2001.  

\bibitem{af} M.~Allen and M.~Filaseta, \textit{A generalization of a second irreducibility
theorem of I.~Schur}, Acta Arith.~109 (2003), 65--79.

\bibitem{af2}
M.~Allen and M.~Filaseta,
\textit{A generalization of a third irreducibility theorem of {I}.~{S}chur},
Acta Arith.~114 (2004), 183--197.


\bibitem{d} G.~Dumas, \textit{Sur quelques cas d'irr\'eductibilit\'e des
polyn\^omes \`a coefficients rationnels}, Journal de Math.~Pures et Appl., 2 (1906), 191--258.



\bibitem{fi} M.~Filaseta, \textit{A generalization of an irreducibility
theorem of I.~Schur}, in: Analytic Number Theory, Proc.~Conf.~in Honor of Heini Halberstam, vol.~1, 
B.~C.~Berndt, H.~G.~Diamond, and A.~J.~Hildebrand (eds.), Birkh\"auser, Boston, 
1996, 371--395.


\bibitem{gaps} M. N. Huxley, 
\textit{On the difference between consecutive primes}, Invent. Math.~15 (1972), 155--164.


\bibitem{jakhar}
A.~Jakhar, \textit{On Schur's irreducibility results and generalised $\varphi$-Hermite polynomials}, 
arXiv:2306.01767.

\bibitem{mahler} K.~Mahler,
Lectures on Diophantine Approximations, Cushing-Malloy, Ann Arbor, 1961.  

\bibitem{thue} L. J. Mordell, Diophantine Equations,  Academic Press,
London, 1969.


\bibitem{sc} I.~Schur,
\textit{Einige S\"atze \"uber Primzahlen mit Anwendungen auf Irreduzibilit\"atsfragen, I},
Sitzungsber.~Preuss.~Akad.~Wiss.~Berlin Phys.-Math.~Kl., 14 (1929), 125--136.

\bibitem{sch} I.~Schur, \textit{Einige S\"atze \"uber Primzahlen mit Anwendungen 
auf Irreduzibilit\"atsfragen, II},
Sitzungsber.~Preuss.~Akad.~Wiss.~Berlin Phys.-Math.~Kl., 
14 (1929), 370--391.

\end{thebibliography}
\end{document}